\newcommand{\BibTeX}{{\scshape Bib}\kern-.08em\TeX}
\newcommand{\T}{\S\kern .15em\relax }
\newcommand{\AMS}{$\mathcal{A}$\kern-.1667em\lower.5ex\hbox
        {$\mathcal{M}$}\kern-.125em$\mathcal{S}$}
\newtheorem{lemma}{Lemma}
\newtheorem{remark}{Remark}
\newtheorem{theorem}{Theorem}
\newtheorem{definition}{Definition}
\newtheorem{proposition}{Proposition}
\title[ADP for a Mean-field Game of Traffic Flow]{Approximate Dynamic Programming for a Mean-field Game of Traffic Flow: \\ Existence and Uniqueness}
\date {\today}
\author{Amoolya Tirumalai}
\address{Department of Electrical and Computer Engineering \& Institute for Systems Research, University of Maryland, 8223 Paint Branch Dr, College Park, MD 20740, USA. }
\email{ast256@umd.edu}
\author{John S. Baras}
\address{Department of Electrical and Computer Engineering \& Institute for Systems Research, University of Maryland, 8223 Paint Branch Dr, College Park, MD 20740, USA. }
\email{baras@umd.edu}
\keywords{traffic flow, approximate dynamic programming, mean-field games, weak solutions, semigroups}
\begin{document}
\def\smfbyname{}

\begin{abstract}
	Highway vehicular traffic is an inherently multi-agent problem. Traffic jams can appear and disappear mysteriously. We develop a method for traffic flow control that is applied at the vehicular level via mean-field games. We begin this work with a microscopic model of vehicles subject to control input, disturbances, noise, and a speed limit. We formulate a discounted-cost infinite-horizon robust mean-field game on the vehicles, and obtain the associated dynamic programming (DP) PDE system. We then perform approximate dynamic programming (ADP) using these equations to obtain a sub-optimal control for the traffic density adaptively. The sub-optimal controls are subject to an ODE-PDE system. We show that the ADP ODE-PDE system has a unique weak solution in a suitable Hilbert space using semigroup and successive approximation methods. We additionally give a numerical simulation, and interpret the results.
\end{abstract}
\thanks{This work was supported in part by the US Office of Naval Research (ONR) Grant No. N00014-17-1-2622. The work herein expresses the views of the authors.}
\maketitle
\tableofcontents

\section{Introduction}

Highway vehicular traffic control is an inherently multi-agent control problem. Optimal control methods for such systems fall victim to the curse of dimensionality as the number of agents grows.

Beginning in the 1950s, models of traffic flow were developed based on macroscopic conservation laws \cite{lighthill1955kinematic, richards1956shock}. These models are generally non-linear hyperbolic PDEs for the spatial vehicle density. Solutions to these PDEs often exhibit shockwave \cite{evans2010partial} phenomena, which correspond to formation of traffic jams.

These models are convenient in some ways. For example, boundary control of PDEs is a rich field which has its basis in control of abstract infinite dimensional systems \cite{fattorini1968boundary, baras1974state}, and has been successfully applied to traffic using reinforcement learning \cite{yu2021reinforcement, belletti2017expert}. These are not control methods for individual vehicles, however. Rather, they specify throughput at on-ramps to a stretch of highway such that the highway does not become congested. This does not fulfill our objective which is:

\textbf{Objective:} Obtain a robust control for individual vehicles on a stretch of road which dissipates congestion on the entire road while preserving ride comfort and increasing vehicle speed.

To attempt to reach our objective, we apply mean-field game theory \cite{lasry2007mean, bensoussan2013mean}. Via a large population limit, mean-field games are a way to approximate a finite-size differential game with a single-agent optimal control problem involving PDEs. Mean-field games have been applied successfully to obtain individual-level controls for traffic in pedestrian and vehicular traffic \cite{chevalier2015micro, tirumalai2021robust, festa2018mean, dogbe2010modeling}.

We do not address the issue of safety and collision avoidance between individual vehicles. An assured safety approach which combines a macroscopic perspective of control with the microscopic can be obtained by first calculating the mean-field control, and then correcting it at the individual vehicle level via control barrier function quadratic-programming \cite{ames2019control}, for example. To our knowledge, such a problem is open. We leave this to subsequent work of ours and others.

\textbf{Summary of problem.} We begin our formulation of the mean-field game with the agent dynamics. In the initial problem description, we have finitely many agents. The agents are bound to a closed track of length $\mathfrak L$. The agents are subject to control input, disturbance, and Gaussian noise. A speed limit and travel direction are imposed as state constraints. Control and disturbance constraints are imposed in addition.

To deal with the state constraints, instead of imposing them as algebraic constraints to our dynamics or on the optimal control problem itself, or considering the dynamics until an exit time, we explicitly include them in the dynamics using the theory of reflected diffusions \cite{ watanabe1971stochastic, tanaka1979stochastic, lions1984stochastic, pilipenko2014introduction} and employ the associated optimal control theory \cite{lions1984optimal, lions1985optimal}. The inclusion of reflecting boundary conditions in mean-field games is briefly mentioned in \cite{lasry2007mean}. Reflecting boundaries are studied in McKean-Vlasov-type mean-field SDEs in \cite{coghi2022mckean}.

We first formulate a finite-size robust stochastic differential game with discounted infinite horizon cost on the constrained-state vehicle dynamics. Then, we formally (as opposed to rigorously) pass to the analogous constrained-state robust infinite-horizon mean-field game. The infinite horizon is chosen so that the time-derivatives of the value function do not enter into the dynamic programming (DP) PDEs. This makes the approximate dynamic programming (ADP) procedure somewhat simpler.

The cost functional we choose in our game accounts for ride comfort, congestion dissipation, and a preference for the highest speed of travel. The problem is an extension of our work in \cite{tirumalai2021robust}. 

We use this mean-field game system to formulate an ADP  ODE-PDE system, which consists of a gradient system for the weights of the approximate value function, and a forward Kolmogorov equation. This ODE-PDE system gives an adaptive control for the traffic density. 

The particular forward Kolmogorov equation we obtain is similar to the Boltzmann-Vlasov equation, which has inspired previous traffic models \cite{prigogine1960boltzmann}.

In the remainder of the paper, we prove weak solutions of the ADP ODE-PDE system exist and are unique.

\textbf{Method of proof.} We prove our main result by constructing a sequence of approximating ODE-PDE systems where the PDE involved is linear. We initialize the sequence of solutions by fixing the solution to the PDE at the initial condition as a `zeroth' stage. Then, we take this `approximate' solution and show the ODEs involved have a unique global classical solution. We take the solution to the ODE system, and apply it as input to the linear approximating PDEs. We show that as a result, the approximating parabolic operator we construct are generators of $\omega$-contractive semigroups. This process of fixing solutions and solving equations is iterated. To show that the iteration converges strongly in a suitable sense, the classical Aubin-Lions-Simon \cite{simon1986compact} compactness theorem is invoked. We show the limit obtained solves the ADP ODE-PDE system we constructed uniquely in a weak sense.

We also present some numerical results.

Briefly, the numerical method we use is a first order finite volume method, similar to the one we used in \cite{tirumalai2021robust}. We use this method as it is simple and, importantly, it preserves non-negativity of the density. There are numerous other methods which one can use to simulate evolution systems such as the one we have posed, including the finite element method, and finite difference method. The finite element method is particularly attractive as it is closely tied to the Galerkin method, which can be used to prove existence and uniqueness of solutions to evolution PDEs. So, one could use the same approximating procedure which one uses in theory to compute numerical solutions in practice.

In our final sections, we interpret the results, and in our conclusion describe possible extensions to this problem.

\textbf{Novelty and Contributions.} We make some extensions to the most closely related work in \cite{chevalier2015micro},  \cite{huang2019stabilizing}, and our own work in \cite{tirumalai2021robust}. As opposed to \cite{chevalier2015micro}, we consider second-order dynamics, state constraints, as well as robustness. Both \cite{chevalier2015micro} and \cite{huang2019stabilizing} use a finite-horizon formulation, which we also do in our previous work in \cite{tirumalai2021robust}. 

\cite{huang2019stabilizing} gives a similar mean-field game on first-order dynamics to \cite{chevalier2015micro} for pure AV traffic on a ring of road. Both our previous work in \cite{tirumalai2021robust} and the work in \cite{chevalier2015micro} use a forward-backward iterative method to compute numerical solutions to the dynamic programming equations, but \cite{huang2019stabilizing} uses a numerical optimization approach constructed using Newton's method. These are all offline methods.

As opposed to our own previous work, this paper replaces the finite horizon problem with the infinite horizon problem, and employs approximate dynamic programming to obtain a control \textbf{online} as the density evolves.
\newline
\textbf{Summary.} What we do is as follows:
\begin{enumerate}
	\item Formulate dynamics and robust differential game for finitely many agents;
	\item Formulate the corresponding robust mean-field game;
	\item Obtain DP equations for the robust mean-field game;
	\item Define an approximate value function using weighted basis functions;
	\item Use the DP equations to form dynamics for the weights and traffic density;
	\item Show these dynamics give unique weights and controlled traffic densities;
	\item Demonstrate our ADP using a numerical example, and interpret results.
\end{enumerate}
\noindent
\textbf{Notation} We define the quotient space $\mathbb T:= \mathbb R / \mathfrak L \mathbb Z$, where $\mathfrak L > 0$ is a real constant.  So, the equivalence on $\mathbb T$ is: 
$$
\theta \equiv_{\mathbb T} \theta + \mathfrak L m , \text{ with } m \text{ an arbitrary integer}. 
$$
$\mathcal B(Y)$ is the Borel $\sigma$-algebra associated to the Polish space $Y$. For arbitrary set $\Sigma \in \mathcal B(Y)$, $$\mathbb I_\Sigma(\cdot):Y \rightarrow \{0,1\}$$ is the indicator function of $\Sigma$. $\{\Omega, \mathcal B(\Omega), \mathbb P \}$ is an arbitrary probability space. Spaces $$\xLn{p}(\Omega;Y) := \xLn{p}(\Omega, \mathcal B(\Omega), \mathbb P;Y, \mathcal B(Y), \mu)$$ is a shorthand used for Lebesgue spaces of $p$-integrable random variables which take values in $Y$ under the usual topologies on those spaces. For a random variable $Z:\Omega \rightarrow Y$, we define the $\sigma$-algebra generated by $Z$ as:
$$
\sigma(Z):= \{ \Sigma \in \mathcal B(\Omega) : \Sigma = Z^{-1}(\mathfrak B), \mathfrak B \in \mathcal B(Y) \}.
$$ 
For a filtration $\mathcal F(t) \subset \mathcal B(\Omega)$, $\xLtwo_{\mathcal F}(0,T;\xLtwo(\Omega; Y))$ is the Hilbert space of random processes taking values for each realization in $Y$ which are also adapted to the filtration $\mathcal F(\cdot)$ at each time $t \in \mathbb [0,T]$. $\xLn{4}_{\mathcal F}(\Omega;\xCzero([0,T];Y))$ is the space of pathwise-continuous quartically-integrable $Y$-valued random processes adapted to $\mathcal F(\cdot)$.
$\text{PWC}(\mathbb R_0^+;\mathbb R)$ is the set of piecewise-continuous functions over the non-negative reals taking values over the reals with the supremum norm topology. $\xHn{k}(Y;\mathbb R)$ denotes the Sobolev space \cite{evans2010partial, gilbarg2015elliptic, brezis2011functional} of functions with $k$ weak derivatives in $\xLtwo(Y;\mathbb R)$ under the usual topology. For some normed spaces $U,V$, $\xCzero(U;V)$ are continuous functions with the supremum norm topology:
$
||f||_{\xCzero(U;V)} := \sup_{u \in U} ||f(u)||_V.
$
$\xCzero_b(U;V)$ are the uniformly bounded functions in $\xCzero(U;V)$. The space 
\[
\text{Bounded}_{\mathcal F}(\Omega \times [0,T];\mathbb R)
\] 
are the surely-bounded random processes adapted to $\mathcal F(\cdot)$. This space has norm: $||X||_{\text{Bounded}_{\mathcal F}(\Omega \times [0,T];\mathbb R)}:= \sup_{(\omega, t) \in \Omega \times [0,T]} |X_t(\omega)|$. $\text{Lip}(\mathbb R;\mathbb R)$ are the real-valued Lipschitz functions over the reals under its topology as a H\"older space \cite{gilbarg2015elliptic, evans2010partial}. $\mathcal D(\mathbb R;\mathbb R)$ are the test functions on $\mathbb R$, and $\mathcal D^*(\mathbb R;\mathbb R)$ is the dual space of the test functions.

In this paper, let $Y:= \mathbb T \times [0,s_{max}]$. For a product space $U \times V$,
$$
\partial (U \times V) = (U \times \partial V) \cup (\partial U \times V).
$$
Since $\partial \mathbb T = \emptyset$, $\partial Y = \mathbb T \times \{0, s_{max}\}$. In physical terms, $s_{max}$ is the speed limit for the vehicles.
\section{Problem Formulation}
In this section, we first formulate a finite-size robust constrained-state discounted infinite-horizon stochastic differential game, and then formally (as opposed to rigorously) pass to the mean-field analogue.
\subsection{Finite-size Game}
Take a sequence of i.i.d. random variables $\{(x_0^{i,N}, v_0^{i,N})\}_{i=1}^N$, where $(x_0^{i,N},v_0^{i,N}): \Omega \rightarrow Y$.
Consider a sequence of agents' position-velocity pairs: $\{(x^{i,N}_t, v^{i,N}_t )\}_{i=1}^N \subset 
Y$, with: 
$$(x_{(\cdot)}^{i,N}, v_{(\cdot)}^{i,N}):\Omega \times [0,T]\rightarrow Y.$$ 
Suppose that these follow the It\^o SDEs with reflection:

\begin{equation}
	\begin{split}
		\label{sde1}
		dx_t^{i,N} &= v_t^{i,N} dt; \\ 
		dv_t^{i,N} &= (u^{i,N}_t + w^{i,N}_t) dt  + \nu(v_t^{i,N})dl^{i,N}_t + \sqrt{2\epsilon} \text{ } dW_t^{i,N};\\
		&(x^{i,N}_t,v^{i,N}_t)\Big |_{t = 0}(\omega) = (x_0^{i,N}, v_0^{i,N})(\omega) \in Y \text{ } \forall \text{ } \omega \in \Omega.
	\end{split} 
\end{equation}
\noindent Here, $\epsilon > 0$ is the noise strength. $W^{i,N}_{(\cdot)}$ is a standard scalar Wiener process \cite{oksendal2010stochastic}, and we have $N$ independent copies of these. We assume that these are independent of the initial conditions $\{(x_0^{i,N},v_0^{i,N}) \}_{i=1}^N$. Define: $$\mathcal F^{i,N}(t):= \sigma((x_0^{i,N},v_0^{i,N},W^{i,N}_s)  | s \leq t).$$ $\nu(0) = 1, \nu(s_{max}) = -1, \nu(v) = 0$ otherwise on $(0,s_{max})$, i.e. it is the \textit{inward} pointing direction. \[l_t^{i,N} \in \xLn{4}_{\mathcal F^{i,N}}(\Omega;\xCzero([0,T]; \mathbb R))\] is a surely non-decreasing process (see \cite{tanaka1979stochastic, lions1984stochastic, pilipenko2014introduction} for details) with $l_{t=0}^{i,N} = 0$, and for each $t \in [0,T]$:
$$
\int_0^t \mathbb I_{(0,s_{max})}(v_\tau^{i,N}) dl_\tau^{i,N} = 0, \int_0^t | \nu(v_\tau^{i,N}) | dl^{i,N}_\tau < \infty.
$$
So, the process $l_{(\cdot)}^{i,N}$ increases only when $v_{(\cdot)}^{i,N}$ is on the boundary of $[0,s_{max}]$, i.e. it increases so that the speed limit and direction of travel are enforced on the vehicle dynamics. Define the admissible control and disturbance sets as:
$$\mathbf U:= [-u_{max},u_{max}], \mathbf W:=[-w_{max},w_{max}].$$
$u^{i,N} \in \mathcal U^{i,N} :=\text{Bounded}_{\mathcal F_{i,N}}(\Omega \times [0,T];\mathbf U)$ is the surely bounded acceleration or braking specified by the vehicle's controller, and $w^{i,N} \in \mathcal W^{i,N} :=
\text{Bounded}_{\mathcal F^{i,N}}(\Omega \times [0,T];\mathbf W)$ is an external disturbance, which is also surely bounded. Suppose $0 < w_{max} < u_{max} < \infty$.

If we assume (abusing notation slightly) that we have feedback control and state-dependent disturbances: $$u^{i,N}_t = u^{i,N}(t,x^{i,N}_t,v^{i,N}_t), w^{i,N}_t = w^{i,N}(t,x^{i,N}_t,v^{i,N}_t)$$ and that 
$(u^{i,N},w^{i,N}) \in \text{PWC}([0,T];\text{Lip}(Y;\mathbf U \times \mathbf W))$, there is a unique $\xLn{4}_{\mathcal F^{i,N}}(\Omega;\xCzero([0,T];Y))$ solution to the given SDEs called $(x^{i,N}_{(\cdot)}, v^{i,N}_{(\cdot)})$ \cite{lions1984stochastic, pilipenko2014introduction}.

We define the empirical measure of $\{(x^{j,N}_t, v^{j,N}_t )\}_{j=1\neq i}^N$ by
$
\mu_{(\cdot)}^{i,N} : \Omega \times [0,T] \rightarrow \mathcal P(Y):
$ 
$$
\mu_t^{i,N}(\Sigma):= \frac{1}{N-1}\sum_{j=1\neq i}^N \delta_{(x_t^j,v_t^j)}(\Sigma) \text{ for }
\Sigma \in \mathcal B(Y),
$$
where we have suppressed the dependence on $\omega \in \Omega$, and where $\delta_{(\cdot)}$ is the standard Dirac measure on $Y$. Let $\mathbf x_t^N = (x_t^1, ..., x_t^N)^\top, \mathbf v_t^N = (v_t^1, ..., 
v_t^N)^\top$, and denote the vectors which exclude the $i$-th entries of these vectors by $(\mathbf x_t^{-i,N},\mathbf v_t^{-i,N})$. Similarly, let $\mathbf u(t):=(u^1(t),...,u^N(t))$ and $\mathbf w(t):= (w^1(t),...,w^N(t))$, and let the exlusion of the $i-$th entries be $\mathbf u^{-i,N}, \mathbf w^{-i, N}$. Note that actually:
\[
\mu_t^{i,N} = \mu_t^{i,N}[\mathbf u^{-i,N}, \mathbf w^{-i, N}],
\]
i.e. the empirical measure of the exogenous agents is a functional of their controls and disturbances.
Suppose that the `optimal' control taken by the `$-i$' agents exists and is $\hat{\mathbf{u}}^{-i,N}$, and that the `worst-case' disturbance for the `$-i$' agents exists and is $\hat{\mathbf{w}}^{-i,N}$.

For each agent, we define the following optimal control problems:
\begin{equation}
	\begin{split}
		\label{game_N}
		\inf_{u^{i,N} \in \mathcal U^{i,N}} \sup_{w^{i,N} \in \mathcal W^{i,N}}&\text{ }   \mathbb E \Big [ \mathcal J^{i,N}[u^{i,N},w^{i,N}, \hat{\mathbf{u}}^{-i,N}, \hat{\mathbf{w}}^{-i,N};(x^{i,N}_{t=0}, v^{i,N}_{t=0}), t=0] \Big ]. \\ 
		&\text{s.t.} \text{ } (\ref{sde1})
	\end{split}
\end{equation}
So, the agent attempts to make the best decision while accounting for the worst disturbance it can face, assuming that the other agents already make the best decisions they can subject to the worst disturbances they  face (see \cite{bacsar1998dynamic, basar2018handbook} for similar formulation descriptions).

Here, the cost functional is:

\begin{equation*}
	\begin{split}
		&\mathcal J^{i,N}[u^{i,N},w^{i,N}, \hat{\mathbf{u}}^{-i,N}, \hat{\mathbf{w}}^{-i,N};(x^{i,N}_{\tau}, v^{i,N}_{\tau}), \tau]:= ... \\
		& \int_{\tau}^\infty e^{-\alpha s} \mathcal  L^{i,N}(x_s^{i,N}, v^{i,N}_s, u^{i,N}_s, w^{i,N}_s,\mu_s^{i,N}[\hat{\mathbf{u}}^{-i,N}, \hat{\mathbf{w}}^{-i,N}]) \text{ }ds,
	\end{split}
\end{equation*}

where

\begin{equation*}
	\begin{split}
		&\mathcal L^{i,N}(x^{i,N},v^{i,N}, u^{i,N},w^{i,N},\mu^{i,N}):= ... \\ 
		& \frac{1}{2}(u^{i,N})^2 -  \frac{1}{2\gamma^2} (w^{i,N})^2 + (\int_Y \phi(x^{i,N},\eta_1)d\mu^{i,N}(\eta_1, \eta_2) - \frac{1}{\beta})v^{i,N}.
	\end{split}
\end{equation*}

\noindent
In this cost, the first term accounts for ride comfort, i.e. the vehicle is penalized for accelerating too quickly. The second term rewards the controller for causing the disturber to apply its best opposition. In the third term, the integral term causes the controller to slow down or speed up vehicles where there is high congestion. The multiplication of $\beta^{-1}v^{i,N}$ enforces a preference for the vehicle to move as quickly as possible. So, there are a number of competing objectives the controller attempts to balance.

We could apply dynamic programming to each agent's optimal control problem, but this would lead to a large system of coupled dynamic programming  PDEs, and, assuming that the solution to such a system exists, this would lead to a fully centralized optimal control \cite{basar2018handbook, friedman1972stochastic}. Instead, we use mean field games.
\subsection{Mean-field Game}
Suppose $N\rightarrow \infty$, so we have an infinite sequence of agents' position-velocity pairs: $\{(x_t^i, v_t^i)\}_{i=1}^\infty \subset Y$, with 
$(x_{(\cdot)}^i, v_{(\cdot)}^i) : \Omega \times [0,T] \rightarrow Y$. From this sequence of agents, we exclude one anonymous representative agent $i$, and drop its indexing. This agent is subject to the reflected It\^o SDEs:
\begin{equation}
	\begin{split}
		\label{sde2}
		dx_t &= v_t dt ;  \\ 
		dv_t &= (u_t + w_t) dt + \nu(v_t)dl_t + \sqrt{2\epsilon} \text{ } dW_t ; \\
		&(x_t,v_t)\Big |_{t = 0}(\omega) = (x_0, v_0)(\omega) \in Y \text{ } \forall \text{ } \omega \in \Omega.
	\end{split} 
\end{equation}
Assume that 
\[
u \in \mathcal U :=\text{Bounded}_{\mathcal F}(\Omega \times [0,T];\mathbf U),w \in \mathcal W :=
\text{Bounded}_{\mathcal F}(\Omega \times [0,T];\mathbf W),
\]
\noindent 
where each of the quantities here are anonymized analogues of what appear in (\ref{sde1}). 
Define:
\[
\mathcal F(t):= \sigma((x_0,v_0,W_s) | s \leq t).
\]
Again, assume $x_0,v_0$ are independent of $W_{(\cdot)}$.
Similarly to before, assume (again abusing notation slightly) that we have feedback control and state-dependent disturbances: 
\[
u_t = u(t,x_t,v_t), w_t = w(t,x_t,v_t).
\] 
If $(u,w) \in \text{PWC}([0,T];\text{Lip}(Y;\mathbf U \times \mathbf W))$, we have a $\xLn{4}_{\mathcal F}(\Omega;\xCzero ([0,T]; Y))$ solution $(x_{(\cdot)},v_{(\cdot)})$ \cite{lions1984stochastic}.
Suppose the optimal control taken by all of the exogenous `$-i$' agents exists and is $u^*: \mathbb R^+_0 \times Y \rightarrow \mathbf U$ and the worst-case disturbance they encounter exists and is $w^*: \mathbb R^+_0 \times Y \rightarrow \mathbf W$. We will specify for what problems these are optimal shortly in (\ref{mfg}). These \textbf{anonymized} exogenous agents each follow \textbf{independent} copies of:
\begin{equation}
	\begin{split}
		d\Xi_t &= \Upsilon_t dt; \\ 
		d\Upsilon_t &=  (u^*(t, \Xi_t, \Upsilon_t) + w^*(t,\Xi_t, \Upsilon_t) )dt + \nu(\Upsilon_t) \text{ } dL_t + \sqrt{2\epsilon} \text{ } dB_t; \\
		&(\Xi_t,\Upsilon_t)\Big |_{t = 0}(\omega) = (x_0, v_0)(\omega) \in Y \text{ } \forall \text{ } \omega \in \Omega,
	\end{split} 
\end{equation}
where $B_{(\cdot)}$ is another anonymized scalar Wiener process independent of $W_{(\cdot)}$ and $x_0,v_0$. 
Define:
\[
\mathcal F^*(t):= \sigma((x_0,v_0,B_s) | s \leq t).
\]
If $(u^*, w^*) \in \text{PWC}([0,T];\text{Lip}(Y;\mathbf U \times \mathbf W))$, then  $(\Xi_{(\cdot)}, \Upsilon_{(\cdot)}) \in \xLn{4}_{\mathcal F^*}(\Omega;\xCzero([0,T]; Y))$. For convenience, let $y_{(\cdot)}:= (\Xi_{(\cdot)},\Upsilon_{(\cdot)})$.

We let $m(t)\equiv m[u^*,w^*](t)$ be the distribution of $y_t$ (and therefore a functional of the optimal control and worst-case disturbance), and assume that $\mu_t^{i,N} \rightarrow m(t)$ weakly-* in $\mathcal P(Y)$ for each $i \in \mathbb N_1$, $t \in [0,T]$. Recall that $y_1 \equiv_{\mathbb T} y_1 + \mathcal L$. We can show that the density of $m$ called $\rho:[0,T] \times Y \rightarrow \mathbb R$ satisfies the forward Kolmogorov (FK) equation over $[0,T]\times Y $:
\begin{equation}
	\begin{split}
		\label{measures}
		&\partial_t \rho(t,\cdot)+ \mathfrak A (u^*,w^*)\rho(t,\cdot) =  0 \text{ in } (0,T] \times \text{int } Y ; \\ &\rho(0,\cdot) = \rho_0 \text{ in } \text{int } Y; \\
		& \rho (t,y_1,y_2) = \rho (t,y_1 + \mathfrak L,y_2) \text{ in } [0,T] \times Y; \\
		& \partial_{y_2} \rho (t,y_1,0) = \partial_{y_2} \rho (t,y_1,s_{max}) = 0 \text{ in } [0,T] \times \mathbb T,
	\end{split} 
\end{equation}
where we have assumed that everywhere on $\partial  Y$, $u^*(t,\cdot) = w^*(t,\cdot) = 0$, and:
\begin{equation}
	\begin{split}
		&  \mathfrak A (u,w)f(\cdot) := -\epsilon \partial_{y_2}^2 f(\cdot) + \nabla_{y} \cdot [(y_2, u(\cdot) + w(\cdot))^\top  f(\cdot)].
	\end{split}
\end{equation}
This evolution equation closely resembles a Boltzmann-Vlasov equation of gas dynamics subject to diffusion and some accelerations $u^*, w^*$ \cite{neunzert1984introduction}.
The boundary conditions for this case would usually be of Robin type:
\[
\mathbf n^\top((y_2, u^*(t, y) + w^*(t, y))^\top \rho(t,y) - \begin{bmatrix}
	0 & 0 \\ 0 & \epsilon
\end{bmatrix} \nabla_x f(t,y)) = 0 \text{ on } \partial Y,
\]
but due to the assumption we made (and what we see later in the solution of our optimal control problem) the first term involving the drift drops out, leaving only the normal derivative, which, due to the structure of $\partial Y$, becomes the simple homogeneous Neumann condition we expressed earlier.

Define:
\begin{equation*}
	\begin{split}
		&\Lambda(y_1,y_2,u,w, m):= \frac{1}{2}u^2 - \frac{1}{2\gamma^2}w^2 + (\int_{Y} \phi(y_1,\eta_1) dm(\eta_1,\eta_2) - \beta^{-1})y_2,
	\end{split}
\end{equation*}
which is s.t. $\mathcal L^{i,N}(y_1,y_2, u,w, \mu_t^{i,N}) \rightarrow \Lambda(y_1,y_2,u,w,m(t))$ for fixed $y_1,y_2, u,w$ under the convergence assumption for $\mu_{(\cdot)}^{i,N}$ and $m(\cdot)$ we made earlier in this section. The mean-field game cost accounts for the same features that the one for the finite-size game does.

We formulate the mean-field game:
\begin{equation}
	\begin{split}
		\label{mfg}
		\inf_{u \in \mathcal U} \sup_{w \in \mathcal W}&\text{ }   \mathbb E \Big [ \mathcal J[u,w, {u}^{*}, {w}^{*};(x_{t=0}, v_{t=0}), t=0] \Big ], \\ 
		&\text{s.t.} \text{ } (\ref{sde2}) \\
	\end{split}
\end{equation}
where
\begin{equation}
	\begin{split}
		\mathcal J[u,w, {u}^{*}, {w}^{*};(x_{\tau}, v_{\tau}), \tau]:= \int_{\tau}^\infty e^{-\alpha  s}\Lambda(x_s,v_s, u_s, w_s, m[u^*,w^*](s)) ds,
	\end{split}
\end{equation}
with $\alpha > 0$ as the discount factor.
As noted in the introduction, the constraint that $v_t^* \in [0,s_{max}]$ for $t \in [0,T]$ is automatically satisfied by an optimal state trajectory $(x^*_{(\cdot)}, v^*_{(\cdot)})$ due to the structure of the dynamics (\ref{sde2}).

Note that although this optimal control problem is formulated for a single agent, calling this a game is correct for these reasons:
\begin{enumerate}
	\item This is a min-max problem, which can be interpreted as a game between a player and the environment;
	\item The exogenous agents' dynamics are included through the mean field measure $m(\cdot) \equiv m[u^*,w^*](\cdot)$, which is the probability distribution of the agents who are observing the optimal control and worst environmental disturbance. So, the representative is playing against the mean-field;
	\item The representative is itself anonymous, so the same problem can be posed for any agent indexed by $i \in \mathbb N_1$. We have dropped the indexing since it does not actually matter.
\end{enumerate}
In particular, the assumption that the representative agent is playing against the best decisions taken by the rest of the players (here, the other players are the other vehicles and the environment) is a standard assumption of game theory. See \cite{bacsar1998dynamic, basar2018handbook, bensoussan2013mean} for details. We now give the dynamic programming system which solves the mean-field game if a suitable solution exists.

One should also note that as a part of dynamic programming for the mean-field game, we realize that the distribution of the representative agent $(x_{(\cdot)}, v_{(\cdot)})$ when subject to the optimal control is the same as the mean-field distribution $m(\cdot)$ \cite{bensoussan2013mean}. So, we refer to either one interchangeably.

Define the pre-Hamiltonian:
\begin{equation}
	\mathcal H(y,u, w, m,p):= \Lambda(y, u, w, m) + p^\top g(y, u, w),
\end{equation}
where $g(y, u, w):= (y_2, u + w)^\top$. 

We now write the mean-field game system of PDEs which gives the solution of the previous robust mean-field game. Note that we replace the $\inf \sup$ formulation with a $\min \max$. The respective problems turn out to be rather trivial. Indeed they are scalar problems soluble by simple applications of the extreme value theorem.
\begin{proposition}
	
	The stationary value function and and density  $(\mathcal V, P)$, $P$ the density of $\mathbf m$, corresponding to the solution of the given robust mean-field game satisfy the system of PDEs:
	\begin{equation}
		\begin{split}
			\label{mfg-static}
			&\alpha  \mathcal V + H(y, \mathbf m, \nabla_{y}\mathcal V) + \epsilon \partial_{y_2}^2 \mathcal V = 0 \text{ in } \text{int } Y; \\
			& \alpha (P - \rho_0) + \mathfrak A(u^*, w^*) P  = 0  \text{ in } \text{int } Y ;\\ 
			&H(y,\mathbf m,p):=  \min_{u \in \mathbf U} \max_{w \in \mathbf W } \mathcal H(y, u, w, \mathbf m,p); \\
			& w^*(u) = \arg \max_{w \in \mathbf W} \mathcal H(y, u, w, \mathbf m,\nabla_{y}\mathcal V) ; \\ 
			&u^*  = \arg \min_{u \in \mathbf U} \mathcal H(y, u, w^*(u), \mathbf m,\nabla_{y}\mathcal V); \\
			& \mathcal V (y_1,y_2) = \mathcal V (y_1 + \mathfrak L,y_2) \text{ in } Y; \\
			& \partial_{y_2} \mathcal V (y_1,0) = \partial_{y_2} \mathcal V (y_1,s_{max}) = 0 \text{ in } \mathbb T; \\
			& P (y_1,y_2) = P (y_1 + \mathfrak L,y_2) \text{ in } Y; \\
			& \partial_{y_2} P (y_1,0) = \partial_{y_2} P (y_1,s_{max}) = 0 \text{ in } \mathbb T,
		\end{split}
	\end{equation}
	assuming a solution exists and is regular-enough.
\end{proposition}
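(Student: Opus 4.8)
The plan is to establish the three components of (\ref{mfg-static}) in turn --- the Hamilton--Jacobi--Bellman equation for $\mathcal V$, the pointwise min--max that yields $H$ together with the feedback maps $u^*,w^*$, and the Fokker--Planck-type equation for $P$ --- the mean-field consistency relating $\mathbf m$ to $P$ then closing the loop. For the value function: once the exogenous profile $(u^*,w^*)$ (hence $\mathbf m$) is frozen, the reflected dynamics (\ref{sde2}) and the running cost $\Lambda$ are time-homogeneous, so the value of (\ref{mfg}) at $(\tau,y)$ factors as $e^{-\alpha\tau}\mathcal V(y)$ with $\mathcal V$ time-independent. I would write the dynamic programming principle on $[\tau,\tau+h]$, apply It\^o's formula to $s\mapsto e^{-\alpha s}\mathcal V(y_s)$ along the reflected diffusion, divide by $h$, and let $h\downarrow 0$. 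The infinitesimal generator splits into the interior part $\epsilon\partial_{y_2}^2+g(y,u,w)^\top\nabla_y$ plus the boundary contribution $\nu(y_{2,s})\,\partial_{y_2}\mathcal V\,dl_s$; since $dl_s$ is carried by $\{y_{2,s}\in\{0,s_{max}\}\}$, where $\nu$ is also supported, the homogeneous Neumann conditions $\partial_{y_2}\mathcal V(y_1,0)=\partial_{y_2}\mathcal V(y_1,s_{max})=0$ annihilate that term exactly, while periodicity $\mathcal V(y_1,\cdot)=\mathcal V(y_1+\mathfrak L,\cdot)$ is inherited from $y_1\in\mathbb T$. Optimizing the resulting Bellman relation pointwise in $(u,w)$ replaces $\Lambda+(\nabla_y\mathcal V)^\top g=\mathcal H(y,u,w,\mathbf m,\nabla_y\mathcal V)$ by $H(y,\mathbf m,\nabla_y\mathcal V)$, giving the first PDE of (\ref{mfg-static}) with its boundary conditions; a standard verification argument (It\^o along an arbitrary admissible pair, then along the optimizers) supplies the reverse inequality, so $\mathcal V$ is indeed the value function.

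For the inner structure, with $p=\nabla_y\mathcal V$ one has $\mathcal H=\frac{1}{2}u^2-\frac{1}{2\gamma^2}w^2+\big(\int_Y\phi(y_1,\eta_1)\,d\mathbf m(\eta_1,\eta_2)-\beta^{-1}\big)y_2+p_1y_2+p_2(u+w)$, which is strictly convex in $u$ on the compact interval $\mathbf U$ and strictly concave in $w$ on $\mathbf W$, with the two controls entering separately. The extreme value theorem then gives existence, and strict convexity/concavity uniqueness, of the minimizer $u^*$ and maximizer $w^*$; the first-order conditions followed by projection onto $[-u_{max},u_{max}]$ and $[-w_{max},w_{max}]$ give their closed forms, and because $\mathcal H$ is separably convex--concave the order of $\min_u$ and $\max_w$ is immaterial, so $w^*(u)$ is in fact independent of $u$ and $\min_u\mathcal H(y,u,w^*(u),\mathbf m,p)=H(y,\mathbf m,p)$, matching the statement. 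One also checks that these optimizers vanish on $\partial Y$ --- the compatibility hypothesis flagged before (\ref{measures}) --- which is precisely what lets the normal component of the Fokker--Planck flux collapse to the homogeneous Neumann condition.

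For the density, identify $P$ as the $\alpha$-normalized discounted occupation density of the optimally controlled (equivalently, mean-field) process, $P:=\alpha\int_0^\infty e^{-\alpha t}\rho(t,\cdot)\,dt$, where $\rho$ solves the forward Kolmogorov problem (\ref{measures}) with data $\rho_0$ and the now time-independent coefficients $(u^*,w^*)$. Multiplying $\partial_t\rho+\mathfrak A(u^*,w^*)\rho=0$ by $e^{-\alpha t}$, integrating over $t\in(0,\infty)$, and integrating the time-derivative term by parts using $\rho(0,\cdot)=\rho_0$ and the decay of $e^{-\alpha t}\rho$ yields $\alpha(P-\rho_0)+\mathfrak A(u^*,w^*)P=0$; the periodicity and Neumann boundary conditions for $P$ pass to the limit from those for $\rho(t,\cdot)$, and the Robin-to-Neumann reduction is valid because $u^*=w^*=0$ on $\partial Y$. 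Finally, the mean-field fixed point --- that the measure $\mathbf m$ appearing in $\Lambda$ coincides with the law of density $P$ --- is the standard closure recalled just before the proposition (the representative agent's controlled law equals the mean field); assembling the HJB equation, the saddle-point relations, and the Fokker--Planck equation under this identification produces exactly (\ref{mfg-static}).

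\textbf{Main obstacle.} The delicate points are the interplay of the dynamic programming argument with the reflection term --- justifying that the local time $l_s$ enters It\^o's formula only through $\nu(y_{2,s})\,\partial_{y_2}\mathcal V\,dl_s$ and that the Neumann conditions cancel it --- and the (here purely formal) passage from the finite-horizon Fokker--Planck equation to the stationary discounted identity, notably the interchange of $\int_0^\infty e^{-\alpha t}(\cdot)\,dt$ with the operator $\mathfrak A$ and the use of time-independent coefficients. Since the proposition is asserted only ``assuming a solution exists and is regular-enough'', all of these are carried out formally, and the substantive content is the correct bookkeeping of the three ingredients above rather than any regularity theory.
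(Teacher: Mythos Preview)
Your proposal is correct and follows essentially the same route as the paper's (sketch) proof: the dynamic programming derivation of the HJB equation, the definition $P:=\alpha\int_0^\infty e^{-\alpha t}\rho(t,\cdot)\,dt$ together with the Laplace-transform computation on (\ref{measures}), and the pointwise saddle-point analysis of $\mathcal H$. The paper is terser---it defers the DPP and FK steps to \cite{bensoussan2013mean} and instead records the explicit clipped-ramp formulas (\ref{optimizer1})--(\ref{optimizer2}) for $u^*,w^*$---but the underlying argument is the same as yours.
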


\begin{proof}
	(sketch)
	For details, see \cite{bensoussan2013mean}. In short, one assumes that a solution to a finite-horizon version of the given mean-field game can be extended to arbitrarily long times, and $P$ is defined by:
	$$
	P(\cdot) := \alpha \int_0^\infty e^{-\alpha t} \rho(t,\cdot) dt
	$$
	as in Chapter 7 of \cite{bensoussan2013mean}. Then, standard dynamic programming approaches to the discounted infinite-horizon optimal control problem are used. 
	
	As we obtained in \cite{tirumalai2021robust} by solving the optimization problem on the pre-Hamiltonian, the optimal control and worst-case disturbance are the ramp functions with cutoff:
	\begin{equation}
		\label{optimizer1}
		u^*(p_2):= \begin{cases} 
			u_{max}&\text{ if } p_2 < -u_{max} \\
			-p_2 &\text{ if } -u_{max} \leq p_2 \leq  u_{max} \\
			-u_{max} &\text{ if } u_{max} < p_2 
		\end{cases}
	\end{equation}
	\begin{equation}
		\label{optimizer2}
		w^*(p_2):= \begin{cases} 
			-w_{max} &\text{ if } \gamma^2 p_2 < - w_{max} \\
			\gamma^2 p_2 &\text{ if } -w_{max} \leq \gamma^2 p_2 \leq w_{max} \\
			w_{max} &\text{ if } w_{max} < \gamma^2 p_2.
		\end{cases}
	\end{equation}
	These constrained solutions are found by checking if the unconstrained solution is in the interior of the feasible sets, and if not, one then compares the values at the boundary points. The solutions are unique as the respective problems are concave and convex.
\end{proof}

Note that in this system, the initial condition $\rho_0$ enters into the equation for the density. So, different controls are required for different initial traffic distributions. Conventionally, this system of PDEs is computed offline. Depending on the desired fine detail on the control and access to computing hardware on a specific autonomous vehicle, controls obtained offline might not be useful as the road conditions change, and might not be updated quickly enough. However, the robustness accounts for some of these issues. 

So, we employ an approximate dynamic programming approach that can be computed online as the traffic density evolves.

\section{Approximate Dynamic Programming}
In this section, we introduce a method of approximating the solution to the mean-field game system (\ref{mfg-static}). Let the approximate value function $\tilde{\mathcal V}^{A,B}: [0,T] \times Y  \rightarrow \mathbb R$ be the Fourier series:
\begin{equation}
	\begin{split}
		& \tilde{\mathcal V}^{A,B}(t,y) :=   \sum_{i,j=0}^{K-1} \big ( a_{ij}(t) \sin(\frac{2 \pi i y_1}{\mathfrak L }) + b_{ij}(t)\cos( \frac{2 \pi i y_1}{\mathfrak L}) \big) \cos(\frac{2\pi j  y_2}{s_{max}} ).
	\end{split}
\end{equation}

This choice for an approximate value function satisfies the periodic and Neumann boundary conditions we desire for the value function, and hence also the state constraints. This is the main reason for choosing this approximate value function. In addition, it is linear in the weights, which makes computing gradients w.r.t. the weights much simpler.

For problems where state constraints do not enter, one could pick sigmoid functions or other common basis or activation functions used in machine learning. However, for state constrained problems, these might lead to violation of boundary conditions for the value function. One could penalize violation of boundary conditions instead, but since basis functions which exactly satisfy the boundary conditions (state constraints) were so readily available, we simply picked them.

$A,B: [0,T] \rightarrow \mathbb R^{K \times K}$ will be subject to dynamics which we will specify shortly. 

Unfortunately, due to the control and disturbance constraints, the optimal control (\ref{optimizer1}) and worst-case disturbance (\ref{optimizer2}) are not differentiable w.r.t. the co-state $p$. This leads to issues with uniqueness when we form gradient-based update rules for the approximate value function weights. So, for the optimization problem on the pre-Hamiltonian, we select the following feasible, smooth sub-solutions:
\begin{equation}
	\tilde u(p_2):= -u_{max} \text{tanh}(p_2)
\end{equation}
\begin{equation}
	\tilde w(p_2):= w_{max} \text{tanh}(\gamma^2 p_2).
\end{equation}
These are sigmoidal approximations to the optimal cutoff ramp function solutions.

Now, we define the approximate Hamilton-Jacobi-Bellman-Isaacs (HJB-I) cost (or residual error):
\begin{equation}
	\begin{split}
		E (A, B,\rho) :=  ||\alpha \tilde{\mathcal V}^{A,B} + \tilde H(\cdot, m, \nabla_{y} 
		\tilde{\mathcal V}^{A,B}) + \epsilon \partial_{y_2}^2 \tilde{\mathcal V}^{A,B}||_{\xLtwo (Y;\mathbb R)}^2,
	\end{split}
\end{equation}
where the approximate Hamiltonian ${\tilde{H}}$ is:
\begin{equation}
	\tilde H (y, m, p)  := \mathcal H(y, \tilde u(p_2), \tilde w(p_2), m, p).
\end{equation}
Using these quantities, we define the following ADP ODE-PDE system (suppressing some arguments):
\begin{equation}
	\begin{split}
		\label{learning}
		&\xDrv  {A(t)}{t} = -\theta^{-1}\nabla_{A} E (A(t), B(t),\rho(t,\cdot)), \text{ }A(0) = A_{0}; \\
		&\xDrv{B(t)}{t} = -\theta^{-1}\nabla_{B} E(A(t), B(t),\rho(t,\cdot)), \text{ }B(0) = B_{0}; \\
		&\partial_t \rho(t,\cdot) = - \mathcal A(t;\tilde u(\partial_{y_2} \tilde{\mathcal V}^{A,B}), 
		\tilde w(\partial_{y_2}  \tilde{\mathcal V}^{A,B})) \rho(t,\cdot) \text{ in }  \text{int }Y \times (0, T]; \\
		& \rho(0,\cdot) = \rho_0 \text{ in } Y; \\
		& \rho(t,y_1,y_2) = \rho(t,y_1 + \mathfrak L, y_2) \text{ in } [0,T] \times Y; \\
		& \partial_{y_2} \rho(t, y_1, 0) = \partial_{y_2} \rho(t, y_1, s_{max}) = 0 \text{ in } [0,T] \times \mathbb T, \\
	\end{split} 
\end{equation}
where $0 < \theta^{-1} << 1$ is the learning rate, and:
\begin{equation}
	\begin{split}
		\label{operator}
		&  -\mathcal A (t;u,w)f(t,y) := \epsilon \partial_{y_2}^2 f(t,y) - \nabla_{y} \cdot [(y_2, u(t,y) + w(t,y))^\top  f(t,y)].
	\end{split}
\end{equation}
We define the domain and range of linear operator $-\mathcal A(t;u,w)$ shortly. Note that the PDE and ODEs together form a semi-linear degenerate parabolic system.
\section{Theoretical Results}
Define the Hilbert space for integer $p \geq 0$: 
\begin{equation}
	\mathbb H^p:=  \mathbb R^{K \times K} \times \mathbb R^{K \times K} \times \xHn{p}(Y;\mathbb R),
\end{equation}
with norm ($||\cdot||_F$ is the Frobenius norm):
\begin{equation}
	||(A, B,\rho)||_{\mathbb H^p}^2:=  ||A||_{F}^2 + 
	||B||_{F}^2 + ||\rho||_{\xHn{p}(Y;\mathbb R)}^2,
\end{equation}
which are induced by the usual inner products in the respective spaces.
$\xHn{0}(Y;\mathbb R)$ is just $\xLtwo(Y;\mathbb R)$. Let:
\begin{equation}
	\mathbf L := \begin{bmatrix} 0 & 0 \\ 0 & \epsilon \end{bmatrix}.
\end{equation}
Define: 
\begin{equation}
	\begin{split}
		D(\mathcal A)&:=  \{ f \in \xHn{2}(Y;\mathbb R) : (\nabla_y^\top f) \mathbf L \mathbf n |_{\partial Y} = 0 \text{ almost everywhere}\},
	\end{split}
\end{equation}
which is the (vector) space of $\xHn{2}(Y;\mathbb R)$ functions satisfying the Neumann BCs in the trace sense.
Recall the earlier definition in (\ref{operator}) of the form of linear time-varying operator $-\mathcal A(t;u,w)$ and consider it to be of type:
\begin{equation}
	-\mathcal A(t;u,w):  D(\mathcal A) \rightarrow \xLtwo(Y;\mathbb R),
\end{equation} 
with $u,w$ assumed to satisfy the control-disturbance constraints, and act as parameters for the (unbounded) time-varying linear operator $-\mathcal A(\cdot)$. Also, define $\mathfrak G, \mathfrak H: \mathbb H^{2} \rightarrow \mathbb R^{K \times K}$ as
\begin{equation}
	\mathfrak G(A, B, \rho):= -\theta^{-1}\nabla_{A} E (A, B, \rho), 
\end{equation}
\begin{equation}
	\mathfrak H(A, B, \rho):= -\theta^{-1}\nabla_{B} E (A, B, \rho).
\end{equation}
\begin{definition}
	A weak solution to (\ref{learning}) is a triplet $(A,B,\rho) \in \xCzero([0,T];\mathbb H^1)$ which is s.t. 
	$\forall \text{ } \psi \in \mathcal D(Y;\mathbb R)$:
	\begin{equation}
		\label{def_1}
		A(t) = A_0 + \int_0^t \mathfrak G(A(s), B(s), \rho(s)) \text{ } ds;
	\end{equation}
	\begin{equation}
		\label{def_2}
		B(t) = B_0 + \int_0^t \mathfrak H(A(s), B(s), \rho(s)) \text{ } ds;
	\end{equation}
	\begin{equation}
		\begin{split}
			\label{def_3}
			&( \rho(t), \psi )_{\mathcal D^*(Y;\mathbb R), \mathcal D(Y;\mathbb R)}= ( \rho_0, \psi )_{\mathcal D^*(Y;\mathbb R), \mathcal D(Y;\mathbb R)} + ... \\ 
			& \int_0^t (\rho(s), -\mathcal A^*(s;\tilde u(\partial_{y_2} \tilde{\mathcal V}),\tilde w(\partial_{y_2} \tilde{\mathcal V}) ) \psi )_{\mathcal D^*(Y;\mathbb R), \mathcal D(Y;\mathbb R)} \text{ } ds;
		\end{split} 
	\end{equation}
	\begin{equation}
		\begin{split}
			\label{def_4}
			(\nabla_y^\top \rho) \mathbf L \mathbf n = 0 \text{ almost everywhere on } [0,T] \times \partial Y.
		\end{split} 
	\end{equation}
	Here,
	\begin{equation*}
		\begin{split}
			&( \rho ,-\mathcal A^*(t;u,w)\psi )_{\mathcal D^*(Y;\mathbb R),\mathcal D(Y;\mathbb R)} := \int_Y \rho \epsilon  \partial_{y_2}^2 \psi  + \rho  y_2 \partial_{y_1} \psi + \rho(u + w) \partial_{y_2} \psi \text{ } dy.
		\end{split} 
	\end{equation*}
\end{definition}
We now state precisely the main theoretical result of this paper.
\begin{theorem}
	If $\rho_0 \in \xHtwo(Y;\mathbb R)$ is s.t. $(\nabla^\top_y \rho_0) \mathbf L \mathbf n = 0$ a.e. on $\partial Y$, $A_0, B_0 \in \mathbb R^{K \times K}$, then the ADP ODE-PDE system (\ref{learning}) has a unique weak solution: 
	\begin{equation}
		(A, B, \rho)(\cdot) \in \xCzero([0,T];\mathbb H^{1}) \text{ for arbitrary but finite $T > 0$.}
	\end{equation}
\end{theorem}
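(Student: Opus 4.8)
The plan is to decouple the finite-dimensional weight dynamics from the linear parabolic PDE in (\ref{learning}), run a successive-approximation iteration that alternates between the two, derive uniform-in-$n$ a priori bounds, and pass to the limit by compactness. I would initialize with the ``zeroth stage'' $\rho^{(0)}(t,\cdot)\equiv\rho_0$ on $[0,T]$. Given $\rho^{(n)}\in\xCzero([0,T];\xHn{1}(Y;\mathbb R))$, first solve the ODE system $\dot A=\mathfrak G(A,B,\rho^{(n)}(t))$, $\dot B=\mathfrak H(A,B,\rho^{(n)}(t))$ with $A(0)=A_0$, $B(0)=B_0$, calling the solution $(A^{(n+1)},B^{(n+1)})$, then feed $(A^{(n+1)},B^{(n+1)})$ into the drift of the PDE and solve the resulting linear time-varying problem for $\rho^{(n+1)}$, and iterate. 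Because $\tilde{\mathcal V}^{A,B}$ is a trigonometric polynomial linear in $(A,B)$, the residual $E$ is smooth in $(A,B)$ and depends on $\rho$ only through the bounded linear functional $y_1\mapsto\int_Y\phi(y_1,\eta_1)\rho(\eta_1,\eta_2)\,d\eta$; hence $\mathfrak G,\mathfrak H$ are locally Lipschitz in $(A,B)$ and globally affine-Lipschitz in $\rho\in\xLtwo(Y;\mathbb R)$, and Cauchy-Lipschitz gives a unique local $C^1$ solution. For global existence on $[0,T]$ I would exploit the gradient-flow structure, $\frac{d}{dt}E(A^{(n+1)},B^{(n+1)},\rho^{(n)})=-\theta^{-1}||\nabla_{A,B}E||^2+(\nabla_\rho E,\partial_t\rho^{(n)})$, bounding the cross-term via the PDE estimates and Young's inequality so that $E$, and with it the weights, stays bounded on $[0,T]$ uniformly in $n$ (the bounded range of $\tilde u,\tilde w$ and mass conservation of the Fokker-Planck flow keep the $\rho$-input to these ODEs bounded in $\xLtwo$ uniformly in $n$).

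\textbf{The linear parabolic step.} The feedbacks $\tilde u(\partial_{y_2}\tilde{\mathcal V}^{A^{(n+1)},B^{(n+1)}})$, $\tilde w(\partial_{y_2}\tilde{\mathcal V}^{A^{(n+1)},B^{(n+1)}})$ are smooth in $y$, bounded by $u_{max},w_{max}$, vanish on $\partial Y$ (where $\partial_{y_2}\tilde{\mathcal V}=0$), and are Lipschitz in $(A,B)$. Recalling (\ref{operator}), the operator $-\mathcal A(t;\tilde u,\tilde w)=\epsilon\partial_{y_2}^2-y_2\partial_{y_1}-\partial_{y_2}((\tilde u+\tilde w)\,\cdot)$ on the Neumann domain $D(\mathcal A)$ is, for each frozen $t$, dissipative up to a shift $\omega I$: $\epsilon\partial_{y_2}^2$ with the Neumann BC is non-positive self-adjoint, $y_2\partial_{y_1}$ is skew-symmetric on the $y_1$-torus, and $\partial_{y_2}((\tilde u+\tilde w)\,\cdot)$ is a bounded lower-order perturbation; Lumer-Phillips then yields generation of an $\omega$-quasicontraction semigroup with $\omega$ uniform in $t$ and $n$. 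Since $t\mapsto\mathcal A(t)$ is strongly continuously differentiable with common domain $D(\mathcal A)$, Kato-Tanabe evolution-family theory gives a unique solution $\rho^{(n+1)}\in\xCzero([0,T];\xHn{2}(Y;\mathbb R))\cap C^1([0,T];\xLtwo(Y;\mathbb R))$ satisfying the Neumann trace condition (\ref{def_4}), while mass conservation and non-negativity follow from the divergence form and the boundary conditions.

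\textbf{Uniform estimates, compactness, and uniqueness.} Testing the $\rho^{(n+1)}$-equation against $\rho^{(n+1)}$, against $-\partial_{y_2}^2\rho^{(n+1)}$, and (after differentiating in $y_1$) against $-\partial_{y_1}^2\rho^{(n+1)}$, using boundedness of $\tilde u+\tilde w$, the skew-symmetry of the transport term, and Young's inequality, I expect bounds on $\rho^{(n)}$ in $\xCzero([0,T];\xHn{1}(Y;\mathbb R))\cap L^2(0,T;\xHn{2}(Y;\mathbb R))$ and on $\partial_t\rho^{(n)}$ in $L^2(0,T;\xHn{1}(Y;\mathbb R)^*)$, uniform in $n$, with $(A^{(n)},B^{(n)})$ equibounded in $C^1([0,T];\mathbb R^{K\times K})$. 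The Aubin-Lions-Simon theorem \cite{simon1986compact} and Arzela-Ascoli then extract a subsequence with $\rho^{(n)}\to\rho$ strongly in $\xCzero([0,T];\xLtwo(Y;\mathbb R))$ and $(A^{(n)},B^{(n)})\to(A,B)$ in $\xCzero([0,T];\mathbb R^{K\times K})$. Strong $\xLtwo$-convergence of $\rho^{(n)}$ together with Lipschitz dependence of $(\tilde u,\tilde w)$ on $(A,B)$ is precisely what is needed to pass to the limit in the bilinear term of (\ref{def_3}) and in $\mathfrak G,\mathfrak H$ in (\ref{def_1})--(\ref{def_2}); the limit is a weak solution, and the retained higher-order bound plus the semigroup representation upgrade it to $\xCzero([0,T];\xHn{1}(Y;\mathbb R))$, so $(A,B,\rho)\in\xCzero([0,T];\mathbb H^1)$. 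Uniqueness follows by subtracting two weak solutions and running Gronwall in $\mathbb H^1$: the ODE part is Lipschitz, and the difference of the two linear PDEs has a source controlled by $||(A_1-A_2,B_1-B_2)||_F$ through the Lipschitz dependence of the feedbacks on the weights.

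\textbf{Main obstacle.} The hard part will be the PDE side together with the uniform-in-$n$ bounds: $\mathcal A(t;u,w)$ is degenerate parabolic -- there is no diffusion in $y_1$ -- so there is no full parabolic smoothing, and the $\xHn{1}/\xHn{2}$ a priori estimates must be squeezed out of the $y_2$-ellipticity and the precise skew-symmetric structure of the $y_2\partial_{y_1}$ transport, uniformly in $n$ and over $[0,T]$, while at the same time closing the coupled bound on the weights through the gradient-flow structure of $E$. Verifying carefully that $-\mathcal A(t)$ with domain $D(\mathcal A)$ genuinely generates an $\omega$-quasicontraction semigroup and that $t\mapsto\mathcal A(t)$ is regular enough for the evolution-family machinery is the other delicate point; the finite-dimensional ODE analysis, the compactness extraction, and the limit passage are comparatively routine.
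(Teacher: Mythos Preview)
Your overall architecture---successive approximation starting from $\rho^{(0)}\equiv\rho_0$, Lipschitz ODE step, linear time-varying PDE step via evolution-family theory, uniform bounds, Aubin--Lions--Simon compactness, and Gr\"onwall uniqueness---matches the paper's. The substantive divergence is in the PDE step, and it is the point you yourself flag as the main obstacle: you propose to apply Lumer--Phillips directly to the \emph{degenerate} operator $-\mathcal A(t;u,w)$ on $D(\mathcal A)\subset\xHtwo(Y;\mathbb R)$. The dissipativity estimate is fine, but Lumer--Phillips also needs the range condition, i.e.\ solvability of the resolvent equation $(\lambda I+\mathcal A)f=g$ in $D(\mathcal A)$ for $g\in\xLtwo$. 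With no diffusion in $y_1$ this is a genuinely degenerate elliptic problem; Lax--Milgram is unavailable, and there is no reason the solution lands in $\xHtwo$ (so $-\mathcal A$ with domain $D(\mathcal A)$ is not obviously closed either---the Fourier argument you would need requires control of \emph{both} pure second derivatives). The paper sidesteps this by tying the iteration index to a parabolic regularization: at stage $n$ it solves with $\mathcal L_n(t;u,w):=-\mathcal A(t;u,w)+\tfrac{1}{n}\nabla_y^2$, which is uniformly elliptic, so Lax--Milgram plus elliptic regularity give the resolvent bound and Hille--Yosida applies. The uniform $\xHtwo$ bound then drops out of the graph-norm estimate $\|\rho^n(t)\|_{D(\mathcal L_n)}\le e^{\omega t}\|\rho_0\|_{D(\mathcal L_n)}$, and the extra $\tfrac{1}{n}\nabla_y^2$ term disappears in the weak limit. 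Without this device your scheme does not obviously produce solutions at each stage, let alone the uniform $\xHtwo$ control needed for the compactness step; your proposed energy-method bounds presuppose existence of something to test against.

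Two smaller points. For global existence of the weight ODE the paper does not use the gradient-flow structure of $E$; it simply reads off from the explicit form of the residual that $\|\mathfrak G(A,B,\rho)\|_F,\|\mathfrak H(A,B,\rho)\|_F\le C(1+\|A\|_F+\|B\|_F)$ uniformly in $\rho$ (the $\rho$-dependence enters only through the bounded functional $\int_Y\phi(\cdot,\eta_1)\,dm$), and then runs Gr\"onwall---this is both simpler and gives a bound independent of $n$ immediately. For compactness, the paper's graph-norm estimate gives $\rho^n$ bounded in $L^\infty(0,T;\xHtwo)$ and $\partial_t\rho^n$ bounded in $L^\infty(0,T;\xLtwo)$, so Aubin--Lions--Simon with $q=\infty$ yields convergence directly in $\xCzero([0,T];\mathbb H^1)$; your weaker $L^2(0,T;\xHtwo)$ bound would only give $L^2(0,T;\xHone)$ compactness and would require the additional upgrade you mention.
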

\begin{proof}
	\textbf{Outline of proof.} First, we establish some preliminary results on Lipschitzianity of a number of relevant quantities, and that a sequence of approximating operators to $-\mathcal A(\cdot)$ generate $\omega$-contractive semigroups. Then, we construct sequences of approximate solutions to the ADP ODE-PDE system where we first fix the solution of the FK equation and show the weight equations have a classical solution, and then fix the weights and show the FK equation has a classical solution (in a suitable Sobolev space). This is done via semigroup methods. This procedure results in bounds for the solutions, which we use along with the Aubin-Lions-Simon compactness theorem to conclude that a limit point exists. Finally, we show this limit solves the ADP ODE-PDE system weakly, which is done by taking advantage of the preliminary results on Lipschitzianity. Some of the preliminary results include rather elementary steps, but we include any steps we consider substantive for completeness.
	
	\textbf{Step 0. Preliminary Results.}
	\begin{lemma}
		For every $h_1:= (A_1,B_1, \rho_1),h_2:=(A_2,B_2,\rho_2) \in \mathfrak B \subset \mathbb H^2$, $\mathcal \mathfrak$ an arbitrary bounded set in $\mathbb H^2$, there is a constant $L$ s.t.
		
		\begin{equation}
			||\mathfrak G(h_1) - \mathfrak G (h_2)||_F \leq \theta^{-1} L ||h_1 - h_2||_{\mathbb H^2};
		\end{equation}
		\begin{equation}
			||\mathfrak H(h_1) - \mathfrak H (h_2)||_F \leq  \theta^{-1} L ||h_1 - h_2||_{\mathbb H^2}.
		\end{equation}
	\end{lemma}
	\begin{proof}
		We will focus on $\mathfrak G$, and note that the same results follow analogously for $\mathfrak H$, as the structures are virtually the same. 
		First, recall that for functions defined on Banach spaces $U,V,W$, $f : U \rightarrow V, g : V \rightarrow W$, if $f,g$ are Fr\'echet differentiable, then their composition $g(f(\cdot)): U \rightarrow W$ is also Fr\'echet differentiable, and its Fr\'echet derivative is:
		\begin{equation}
			D_u[g(f(u))] h = D_v[g(f(u))] D_u[f(u)] h
		\end{equation}
		for arbitrary direction $h \in U$ at a particular point $u \in U$. This is the chain rule for Fr\'echet derivatives \cite{cheney2001analysis}. This is simply composition of the Fr\'echet derivatives for the different functions. We can show that:
		\begin{equation}
			D_f [||f||_{\xLtwo(Y;\mathbb R)}^2]h = 2 \langle f, h \rangle_{\xLtwo(Y;\mathbb R)},
		\end{equation}
		and letting:
		\begin{equation}
			\begin{split}
				&f(A,B,\rho):= \alpha \tilde{\mathcal V}^{A,B} + \mathcal H(\cdot, \tilde u(\partial_{y_2}\tilde{\mathcal V}^{A,B}), \tilde w(\partial_{y_2}\tilde{\mathcal V}^{A,B}),m, \nabla_{y} 
				\tilde{\mathcal V}^{A,B}) + \epsilon \partial_{y_2}^2 \tilde{\mathcal V}^{A,B},
			\end{split}
		\end{equation}
		we can inspect $\mathcal H$ and conclude that each term in $f$ is actually smooth in $A$.
		$\tilde{\mathcal V}^{A,B}$ and 
		$\partial_{y_2}^2 \tilde{\mathcal V}^{A,B}$ are linear in $A$, and the terms of $\mathcal H$ which depend on $A$ are compositions of $\partial_{y_2}\tilde{\mathcal V}^{A,B}$ with smooth functions, and are hence smooth in $A$.
		\newline
		So, we differentiate normally and obtain:
		
		\begin{equation*}
			\begin{split}
				&\partial_{a_{ij}}f(A,B,\rho) = ... \\ 
				&\alpha \partial_{a_{ij}}\tilde{\mathcal V}^{A,B} +  \Big ( \partial_u \mathcal H(\cdot, \tilde u(\partial_{y_2} \tilde{\mathcal V}^{A,B}), 
				\tilde w(\partial_{y_2}  \tilde{\mathcal V}^{A,B}), m, \nabla_{y} 
				\tilde{\mathcal V}^{A,B})\partial_{p_2}\tilde u(\partial_{y_2} \tilde{\mathcal V}^{A,B}) + ... \\
				&\partial_w \mathcal H(\cdot, \tilde u(\partial_{y_2} \tilde{\mathcal V}^{A,B}), 
				\tilde w(\partial_{y_2}  \tilde{\mathcal V}^{A,B}), m, \nabla_{y} 
				\tilde{\mathcal V}^{A,B})\partial_{p_2}\tilde w(\partial_{y_2} \tilde{\mathcal V}^{A,B}) \Big ) \partial_{a_{ij},y_2}^2 \tilde{\mathcal V}^{A,B} + ... \\
				&\nabla^\top_{p} \mathcal H(\cdot, \tilde u(\partial_{y_2} \tilde{\mathcal V}^{A,B}), 
				\tilde w(\partial_{y_2}  \tilde{\mathcal V}^{A,B}), m, \nabla_{y} 
				\tilde{\mathcal V}^{A,B})\partial_{a_{ij}} \nabla_{y} 
				\tilde{\mathcal V}^{A,B} + \epsilon \partial_{a_{ij}}\partial_{y_2}^2 \tilde{\mathcal V}^{A,B},
			\end{split}
		\end{equation*}
		
		which are the elements $(\nabla_A f(A,B,\rho))_{ij} = \partial_{a_{ij}}f(A,B,\rho)$. In the above, 
		
		\begin{equation*}
			\partial_{a_{ij}}\tilde{\mathcal V}^{A,B} = \sin(\frac{2 \pi y_1 i}{\mathfrak L}) \cos(\frac{2 \pi y_2 j}{s_{max}});
		\end{equation*}

		\begin{equation*}
			\partial_{y_2}\tilde{\mathcal V}^{A,B} = -\sum_{i=0}^{K-1} \sum_{j = 0}^{K-1} \Big (a_{ij}\sin(\frac{2 \pi y_1 i}{\mathfrak L}) + b_{ij} \cos(\frac{2 \pi y_1 i}{\mathfrak L}) \Big ) \sin(\frac{2 \pi y_2 j}{s_{max}}) \frac{2 \pi  j}{s_{max}};
		\end{equation*}

		\begin{equation*}
			\partial^2_{a_{ij},y_2} \tilde{\mathcal V}^{A,B} = -\sin(\frac{2 \pi y_1 i}{\mathfrak L}) \sin(\frac{2 \pi y_2 j}{s_{max}}) \frac{2 \pi j}{s_{max}};
		\end{equation*}

		\begin{equation*}
			\partial_u \mathcal H(\cdots,\nabla_y \tilde{\mathcal V}^{A,B}) = \tilde u(\partial_{y_2} \tilde{\mathcal V}^{A,B}) + \partial_{y_2} \tilde{\mathcal V}^{A,B};
		\end{equation*}

		\begin{equation*}
			\partial_w \mathcal H(\cdots,\nabla_y \tilde{\mathcal V}^{A,B}) = -\frac{1}{\gamma^2}\tilde w(\partial_{y_2} \tilde{\mathcal V}^{A,B}) + \partial_{y_2} \tilde{\mathcal V}^{A,B};
		\end{equation*}

		\begin{equation*}
			\partial_{p_2} \tilde u(\partial_{y_2} \tilde{\mathcal V}^{A,B}) = -u_{max} \text{sech}^2(\partial_{y_2} \tilde{\mathcal V}^{A,B});
		\end{equation*}

		\begin{equation*}
			\partial_{p_2} \tilde w(\partial_{y_2} \tilde{\mathcal V}^{A,B}) = \gamma^2 w_{max} \text{sech}^2(\gamma ^2\partial_{y_2} \tilde{\mathcal V}^{A,B});
		\end{equation*}

		\begin{equation*}
			\partial_{a_{ij}}\nabla_y^\top \tilde{\mathcal V}^{A,B} = \Big(\cos(\frac{2 \pi y_1 i}{\mathfrak L})\cos(\frac{2 \pi y_2 j }{s_{max}})\frac{2 \pi i}{\mathfrak L}, -\sin(\frac{2 \pi y_1 i}{\mathfrak L})\sin(\frac{2 \pi y_2 j }{s_{max}})\frac{2 \pi j}{s_{max}}\Big );
		\end{equation*}

		\begin{equation*}
			\partial_{y_2}^2 \tilde{\mathcal V}^{A,B} =  -\sum_{i=0}^{K-1} \sum_{j = 0}^{K-1} \Big (a_{ij}\sin(\frac{2 \pi y_1 i}{\mathfrak L}) + b_{ij} \cos(\frac{2 \pi y_1 i}{\mathfrak L}) \Big ) \cos(\frac{2 \pi y_2 j}{s_{max}}) (\frac{2 \pi  j}{s_{max}})^2;
		\end{equation*}

		\begin{equation*}
			\nabla_p^\top \mathcal H(\cdots) = (y_2, \tilde u(\partial_{y_2} \tilde{\mathcal V}^{A,B}) +\tilde w(\partial_{y_2} \tilde{\mathcal V}^{A,B}));
		\end{equation*}

		\begin{equation*}
			\partial_{a_{ij}}\partial_{y_2}^2 \tilde{\mathcal V}^{A,B} = - \sin(\frac{2 \pi y_1 i}{\mathfrak L}) \cos(\frac{2 \pi y_2 j}{s_{max}})(\frac{2 \pi j}{s_{max}})^2.
		\end{equation*}
		
		For $h \in \mathbb R^{K \times K}$, the Fr\'echet (directional) derivative of $f(\cdot,B, \rho)$ in $A$ is:
		
		\begin{equation*}
			D_A [f(A,B,\rho)]h = \langle \nabla_A f(A,B,\rho), h \rangle_{F}.
		\end{equation*}
		
		Now, using the chain rule, we obtain by composing the Fr\'echet derivatives:
		\begin{equation}
			\begin{split}
				D_A[E(A,B,\rho)]h &= 2\langle f(A,B,\rho), \langle \nabla_A f(A,B,\rho), h \rangle_F \rangle_{\xLtwo (Y;\mathbb R)} \\
				&= \langle 2\int_Y f(A,B,\rho) \nabla_A f(A,B,\rho) \text{ }dy, h \rangle_F \\
				&=: \langle \nabla_A E(A,B,\rho), h \rangle_F
			\end{split}
		\end{equation}
		for arbitrary direction $h \in \mathbb R^{K \times K}$ at point $A \in \mathbb R^{K \times K}$. Let 
		
		\begin{equation*}
			M:= \max_{p_2 \in \mathbb R} | p_2 \text{sech}^2(p_2)|.
		\end{equation*}
		\noindent
		Using a simple bounding procedure, we know $0< M < 1$. Using the derivatives we computed for the terms of $\partial_{a_{ij}}f(A,B,\rho)$, we can show by crudely bounding term-by-term, component-by-component:
		\begin{equation}
			\begin{split}
				&|D_A[f(A,B,\rho)]h | \leq
				K^2\Big [ \alpha + [(u_{max} + M)u_{max} + (w_{max} + M)w_{max}]\frac{2 \pi K}{s_{max}}  + ... \\
				& (s_{max} \frac{2 \pi K}{\mathfrak L} + (u_{max} + w_{max})\frac{2 \pi K}{s_{max}}) + \epsilon (\frac{2 \pi K}{s_{max}})^2 \Big ] ||h||_F =: C_1 ||h||_F.
			\end{split}
		\end{equation}
		
		\noindent
		Also, we can perform a similar procedure on $f$ to obtain:
		\begin{equation}
			\begin{split}
				\label{bounds}
				&||f||_{\xCzero_b(Y;\mathbb R)} \leq ... \\ 
				&(\alpha  + \epsilon (\frac{2 \pi K}{s_{max}})^2 + \frac{2 \pi K}{\mathfrak L}s_{max} + ... \\ 
				&\frac{2 \pi K}{s_{max}}(u_{max} + w_{max}))(||A||_F + ||B||_F)K^2 + ... \\
				&\frac{1}{2}u_{max}^2 + \frac{1}{2\gamma^2}w_{max}^2 + (||\phi||_{\xCzero_b(Y;\mathbb R)} + \beta^{-1})s_{max} =: C_2(A,B).
			\end{split}
		\end{equation}
		
		To show that $\mathfrak G$ is Lipschitz, we show that $D_A [D_A E(A,B,\rho)], D_B [D_A E(A,B,\rho)]$, and  $D_\rho [D_A E(A,B,\rho)]$ are bounded bilinear operators. Before we compute these derivatives, we give some more results for Fr\'echet derivatives. For a bilinear functional $\mathfrak F:(U \times V=: W) \rightarrow Z$, $Z$ a Banach space, the joint Fr\'echet derivative of $\mathfrak F$ is:
		\[
		D_w [\mathfrak F(u,v)] h = \mathfrak F(h_1,v) + \mathfrak F(u, h_2)
		\]
		for direction $h = (h_1,h_2) \in U \times V = W$. If $u: X \rightarrow U, v:X \rightarrow V$, $X$ a Banach space, then from our earlier chain rule:
		\[
		D_x [\mathfrak F(u(x),v(x))]h = \mathfrak F(D_x [u(x)]h, v(x)) + \mathfrak F(u(x), D_x [v(x)]h)
		\]
		for direction $h \in X$.
		The bilinear functional we are interested in is $$\langle \cdot, \cdot \rangle_{\xLtwo(Y;\mathbb R)} : \xLtwo(Y;\mathbb R) \times \xLtwo(Y;\mathbb R) \rightarrow \mathbb R.$$ Let $X:= \mathbb H^2$, $U = V := \xLtwo(Y;\mathbb R)$, $W:= \mathbb R$, $x = (x_1,x_2,x_3) = (A, B,\rho)$. Note that for $x = (x_1,...,x_n) \in X_1 \times ... \times X_n$:
		\begin{equation}
			D_x [f(x)] h = \sum_{i=1}^n D_{x_i}f(x)h_i
		\end{equation}
		with $D_{x_i}f(x)h_i$ being the Fr\'echet derivatives on the individual spaces $X_i$. These derivatives are termed partial Fr\'echet derivatives. So, if a function has each partial Fr\'echet derivative defined, then the total Fr\'echet derivative is defined. Applying the differentiation rule for bilinear functionals and the chain rule,
		\begin{equation}
			\begin{split}
				D_A[D_A[E(A, B,\rho)h_1 ] h_2] &= \langle D_A[f(A, B,\rho)]h_1, \langle \nabla_A f(A, B,\rho), h_2 \rangle_F \rangle_{\xLtwo(Y;\mathbb R)} + ... \\
				&\langle f(A, B,\rho), D_A [\langle \nabla_A f(A, B,\rho), h_1 \rangle_F] h_2 \rangle_{\xLtwo(Y;\mathbb R)}.
			\end{split}
		\end{equation}
		The form of the first term is already known from our previous computations. We can also show using our previous computations that:
		
		\begin{equation*}
			\begin{split}
				&\partial_{a_{kl}}\partial_{a_{ij}}f(A, B,\rho)=\partial_{a_{ij}}\partial_{a_{kl}}f(A, B,\rho) = ... \\ 
				&\Big ( -[\partial_{p_2}\tilde u(\partial_{y_2} \tilde{\mathcal V}^{A,B})\partial_{a_{kl}}\partial_{y_2}\tilde{\mathcal V}^{A,B} + \partial_{a_{kl}}\partial_{y_2}\tilde{\mathcal V}^{A,B} ]\partial_{p_2}\tilde u(\partial_{y_2} \tilde{\mathcal V}^{A,B} ) - ... \\ 
				& [\tilde u(\partial_{y_2} \tilde{\mathcal V}^{A,B}) + \partial_{y_2} \tilde{\mathcal V}^{A,B}]\partial_{p_2}^2 \tilde u(\partial_{y_2} \tilde{\mathcal V}^{A,B})\partial_{a_{kj}}\partial_{y_2} \tilde{\mathcal V}^{A,B} + ... \\
				&[-\frac{1}{\gamma^2}\partial_{p_2}\tilde w(\partial_{y_2} \tilde{\mathcal V}^{A,B})\partial_{a_{kl}}\partial_{y_2} \tilde{\mathcal V}^{A,B} + \partial_{a_{kl}}\partial_{y_2} \tilde{\mathcal V}^{A,B}]\partial_{p_2} \tilde w(\partial_{y_2} \tilde{\mathcal V}^{A,B}) + ... \\ 
				&[-\frac{1}{\gamma^2}\tilde w(\partial_{y_2} \tilde{\mathcal V}^{A,B}) + \partial_{y_2} \tilde{\mathcal V}^{A,B}]\partial_{p_2}^2 \tilde w(\partial_{y_2} \tilde{\mathcal V}^{A,B})\partial_{a_{kj}}\partial_{y_2} \tilde{\mathcal V}^{A,B} \Big)\partial_{a_{ij}}\partial_{y_2}\tilde{\mathcal V}^{A,B} + ... \\
				&(\partial_{p_2}\tilde u(\partial_{y_2}\tilde{\mathcal V}^{A,B}) + \partial_{p_2}\tilde w(\partial_{y_2}\tilde{\mathcal V}^{A,B}))\partial_{a_{kl}}\partial_{y_2}\tilde u(\partial_{y_2}\tilde{\mathcal V}^{A,B})\partial_{a_{ij}}\partial_{y_2}\tilde u(\partial_{y_2}\tilde{\mathcal V}^{A,B}).
			\end{split}
		\end{equation*}
		
		This function can be uniformly bounded as:
		
		\begin{equation*}
			\begin{split}
				&\partial_{a_{kl}}\partial_{a_{ij}}f(A, B,\rho) \leq \Big [(u_{max}\frac{2\pi K}{s_{max}} + \frac{2\pi K}{s_{max}} + u_{max}\frac{4}{3 \sqrt{3}} + \tilde M)u_{max}\frac{2\pi K}{s_{max}} + ... \\ 
				& (w_{max}\frac{2\pi K}{s_{max}} + \frac{2\pi K}{s_{max}} + u_{max}\frac{4}{3\sqrt{3}}+\tilde M)\gamma^2 w_{max}\frac{2\pi K}{s_{max}} + (u_{max} + w_{max}\gamma^2)(\frac{2\pi K}{s_{max}})^2\Big] \\
				&=: C_3,
			\end{split}
		\end{equation*}
		
		where $\tilde M:=\max_{p_2}|2 p_2  \text{sech}^2(p_2)\text{tanh}(p_2)|$. This is obtained by determining upper bounds term-by-term.
		Let 
		\[
		[(\nabla_A \otimes \nabla_A)f(A, B,\rho)]_{ijkl} = \partial_{a_{ij}}\partial_{a_{kl}}f(A, B,\rho),
		\] 
		which is the Hessian tensor, which for fixed $(A, B,\rho)$ is $\mathbb R^{K \times K \times K \times K}$-valued. For $\mathfrak M \in \mathbb R^{K \times K \times K \times K}$ and $\mathbf V \in \mathbb R^{K \times K}$, let:
		\[
		\mathfrak M \mathbf V = \sum_{i,j,k,l=0}^{K-1}  \mathfrak m_{ijkl} \mathbf v_{kl} \mathbf e_{ij}
		\]
		where $\mathbf e_{ij} \in \mathbb R^{K \times K}$ are the standard basis for $\mathbb R^{K \times K}$, i.e. matrices with $1$ in the $ij$-th entry. Now, entrywise:
		\[
		D_{a_{kl}}[D_{a_{ij}} E(A, B,\rho)h_{1,ij}]h_{2,kl} = \partial_{a_{kl}}\partial_{a_{ij}}f(A, B,\rho)h_{1,ij}h_{2,kl},
		\]
		which we sum up to conclude:
		\[
		D_A [\langle \nabla_A f(A, B,\rho), h_{1} \rangle_F]h_2 = \langle (\nabla_A \otimes \nabla_A)f(A, B,\rho)h_1, h_2 \rangle_{F}.
		\]
		From the form of $\partial_{a_{kl}}\partial_{a_{ij}}f(\rho,A,B)$ have:
		\[
		|D_{A}[D_{A} f(A, B,\rho)h_1]h_2| \leq K^4 C_3 ||h_1||_F ||h_2||_F
		\]
		thus:
		\[
		|D_A[D_A[E(A, B,\rho)h_1]h_2]| \leq \mathfrak L s_{max}(C_1^2 + C_2(A,B) C_3 K^4 ) ||h_1||_F ||h_2||_F.
		\]
		Now, since $\mathfrak B$ is bounded, define $\mathfrak R:= \sup_{b \in \mathfrak B \subset \mathbb H^2}||b||_{\mathbb H^2}$. Then, $\mathfrak B \subset B(0, \mathfrak R)$, an open ball centered on the origin. So, for every $(A, B,\rho) \in \mathfrak B$,
		\[
		C_2(A,B) \leq C_2(\mathfrak R \mathbf I, \mathfrak R \mathbf I) =: C_4. 
		\]
		Thus, in any bounded set $\mathfrak B \subset \mathbb H^2$, $\mathfrak G$ is boundedly differentiable in $A$.
		We cam employ a similar procedure on $D_B[D_A[E(A, B,\rho)]h_1]h_2$. Similarly to what we computed earlier:
		
		\begin{equation*}
			\begin{split}
				&\partial_{b_{kl}}\partial_{a_{ij}}f(A, B,\rho) = \Big ( -[\partial_{p_2}\tilde u(\partial_{y_2} \tilde{\mathcal V}^{A,B})\partial_{b_{kl}}\partial_{y_2}\tilde{\mathcal V}^{A,B} + \partial_{b_{kl}}\partial_{y_2}\tilde{\mathcal V}^{A,B} ]\partial_{p_2}\tilde u(\partial_{y_2} \tilde{\mathcal V}^{A,B} ) - ... \\ 
				& [\tilde u(\partial_{y_2} \tilde{\mathcal V}^{A,B}) + \partial_{y_2} \tilde{\mathcal V}^{A,B}]\partial_{p_2}^2 \tilde u(\partial_{y_2} \tilde{\mathcal V}^{A,B})\partial_{b_{kl}}\partial_{y_2} \tilde{\mathcal V}^{A,B} + ... \\
				&[-\frac{1}{\gamma^2}\partial_{p_2}\tilde w(\partial_{y_2} \tilde{\mathcal V}^{A,B})\partial_{b_{kl}}\partial_{y_2} \tilde{\mathcal V}^{A,B} + \partial_{b_{kl}}\partial_{y_2} \tilde{\mathcal V}^{A,B}]\partial_{p_2} \tilde w(\partial_{y_2} \tilde{\mathcal V}^{A,B}) + ... \\ 
				&[-\frac{1}{\gamma^2}\tilde w(\partial_{y_2} \tilde{\mathcal V}^{A,B}) + \partial_{y_2} \tilde{\mathcal V}^{A,B}]\partial_{p_2}^2 \tilde w(\partial_{y_2} \tilde{\mathcal V}^{A,B})\partial_{b_{kl}}\partial_{y_2} \tilde{\mathcal V}^{A,B} \Big)\partial_{a_{ij}}\partial_{y_2}\tilde{\mathcal V}^{A,B} + ... \\
				&(\partial_{p_2}\tilde u(\partial_{y_2}\tilde{\mathcal V}^{A,B}) + \partial_{p_2}\tilde w(\partial_{y_2}\tilde{\mathcal V}^{A,B}))\partial_{b_{kl}}\partial_{y_2}\tilde u(\partial_{y_2}\tilde{\mathcal V}^{A,B})\partial_{a_{ij}}\partial_{y_2}\tilde u(\partial_{y_2}\tilde{\mathcal V}^{A,B}),
			\end{split}
		\end{equation*}
		\noindent
		and indeed as before, bounding this term-by-term yields:
		\[
		|\partial_{b_{kl}}\partial_{a_{ij}}f(A, B,\rho)| \leq C_3.
		\]
		Let 
		\[
		[(\nabla_B \otimes \nabla_A)f(A, B,\rho)]_{ijkl} = \partial_{b_{kl}}\partial_{a_{ij}}f(A, B,\rho),
		\]
		and similarly to before:
		
		\begin{equation}
			\begin{split}
				D_B[D_A[E(A, B,\rho)h_1]h_2] &= \langle \langle \nabla_B f(A, B,\rho), h_1 \rangle_F, \langle \nabla_A f(A, B,\rho), h_2 \rangle_F \rangle_{\xLtwo(Y;\mathbb R)} + ... \\
				&\langle f(A, B,\rho), \langle (\nabla_B \otimes \nabla_A)f(A, B,\rho) h_1,h_2 \rangle_F \rangle_{\xLtwo(Y;\mathbb R)},
			\end{split}
		\end{equation}
		\noindent 
		and:
		\[
		|D_B[D_A[E(A, B,\rho)h_1]h_2] | \leq \mathfrak L s_{max}(C_1^2 + C_2(A,B) C_3 K^4 ) ||h_1||_F ||h_2||_F.
		\]
		So, $\mathfrak G$ is boundedly differentiable in $\mathfrak B$. Finally, we turn to the differential in $\rho$.
		\[
		D_\rho[D_A f(A, B,\rho)h]h_\rho = \langle D_\rho f(A, B,\rho)h_\rho, D_A f(A, B,\rho) h\rangle_{\xLtwo(Y;\mathbb R)} 
		\]
		for arbitrary directions $h \in \mathbb R^{K \times K}, h_\rho \in \xLtwo(Y;\mathbb R)$. The second term present in the other gradient drops out as $\partial_{a_{ij}}f(\rho,A,B)$ is constant in $\rho$, and:
		\[
		D_\rho [f(A, B,\rho)]h_{\rho} = \int_Y \phi(\cdot,\eta_1)h_\rho(\eta_1,\eta_2) d\eta.
		\]
		We can bound this with:
		\[
		|D_\rho [f(A, B,\rho)]h_{\rho}| \leq ||\phi||_{\xCzero_b(Y;\mathbb R)} |\langle \mathbb I_Y(\cdot), h_\rho \rangle_{\xLtwo(Y;\mathbb R)}| \leq \mathfrak L s_{max} ||\phi||_{\xCzero_b(Y;\mathbb R)} ||h_\rho||_{\xLtwo(Y;\mathbb R)},
		\]
		so:
		\[
		|D_\rho[D_A f(A, B,\rho)h]h_\rho| \leq \mathfrak L s_{max} ||\phi||_{\xCzero_b(Y;\mathbb R)} C_1 ||h||_F ||h_\rho||_{\xLtwo(Y;\mathbb R)}.
		\]
		Combining the bounds for these three derivatives, it follows:
		\[
		|D_x [D_A [E(A, B,\rho)]h_1]h_2| \leq (2C_5 + \mathfrak L s_{max} ||\phi||_{\xCzero_b(Y;\mathbb R)} C_1)||h_1||_F ||h_2||_{\mathbb H^2}
		\]
		where $C_5:= \mathfrak L s_{max}(C_1^2 + C_4 C_3 K^4 ) $. Now, consider any two points $h_1:=(A_1,B_1,\rho_1),h_2:=(A_2,B_2,\rho_2)$ with $h_1,h_2 \in B$. Take convex combinations:
		\[
		\eta(t):= t h_1 + (1-t)h_2
		\]
		with $t \in [0,1]$. From the mean-value theorem,
		\[
		|D_A [E(h_1) - E(h_2)]h|=|\langle \nabla_A E(h_1) - \nabla_A E(h_2),h\rangle_F| \leq L(h) ||h_1 - h_2||_{\mathbb H^2},
		\]
		where 
		\[
		L(h):=\sup_{t \in [0,1]} ||D_x [D_A E(\eta(t))h]||_{\mathcal L(\mathbb H^2;\mathbb H^2) } \leq (2C_5 + ||\phi||_{\xCzero_b(Y;\mathbb R)} C_1)||h||_F),
		\] 
		which implies that:
		\[
		||\nabla_A E(h_1) - \nabla_A E(h_2)||_{F^*}  \leq 
		L ||h_1 - h_2||_{\mathbb H^2}
		\]
		where 
		\[
		L:=\sup_{h \in B(0,1) \subset \mathbb R^{K \times K}}L(h) = (2C_5 + ||\phi||_{\xCzero_b(Y;\mathbb R)} C_1),
		\]
		and  $||\cdot||_{F^*}$ is the dual norm of the Frobenius norm, defined as:
		\[
		||\mathbf X||_{F^*}:= \sup_{\mathbf Y \in B(0,1) \subset \mathbb R^{K \times K}} |\langle \mathbf X, \mathbf Y  \rangle_F|.
		\]
		Indeed, the space $\{\mathbb R^{K\times K}, ||\cdot||_F\}$ is self-dual, so:
		\[
		||\nabla_A E(h_1) - \nabla_A E(h_2)||_{F}  \leq 
		L_1 ||h_1 - h_2||_{\mathbb H^2}
		\]
		\noindent
		which proves the claim for $\mathfrak G$. Virtually the same argument can be applied to $\mathfrak H$, replacing $\partial_{a_{ij}}$ with $\partial_{b_{ij}}$ as needed. This concludes the proof of Lemma 1.
	\end{proof}
	\begin{lemma}
		Let $\mathcal L_n(t;u,w) : D(\mathcal L_n) \subset \xLtwo(Y;\mathbb R) \rightarrow \xLtwo(Y;\mathbb R)$ be:
		\begin{equation}
			\label{approximator}
			\mathcal L_n(t;u,w)(\cdot):=- \mathcal A(t;u,w)(\cdot) + \frac{1}{n}\nabla_y^2 (\cdot),
		\end{equation}
		\begin{equation}
			\begin{split}
				D(\mathcal L_n)&:=  \{ f \in \xHtwo(Y;\mathbb R) : (\nabla_y^\top f) \mathbf L_n \mathbf n |_{\partial Y} = 0 \text{ almost everywhere}\}.
			\end{split}
		\end{equation}
		where 
		\[
		\mathbf L_n:= \begin{bmatrix} \frac{1}{n} & 0 \\ 
			0 & \frac{1}{n} + \epsilon
		\end{bmatrix}.
		\]
		If 
		\[
		(u,w) \in \xCone([0,T] \times Y;\mathbf U \times \mathbf W)), 
		\] 
		then for each $f \in D(\mathcal L_n),$ $n \geq 1$:
		
		\[
		\mathcal L_n(t;u,w)f \in \xCone([0,T];\xLtwo(Y;\mathbb R)).
		\]
	\end{lemma}
	\begin{proof} 		
		Take $\{f_m\}_{m=1}^\infty \subset \xCinfty(Y;\mathbb R)$ s.t. $f_m \rightarrow f$ in $D(\mathcal L_n)$ under $||\cdot||_{\xHtwo(Y;\mathbb R)}$. Then,
		\[
		[\mathcal L_n(t+h;u,w) - \mathcal L_n(t;u,w)]f_m = -[\mathcal A(t+h;u,w) - \mathcal A(t;u,w)]f_m
		\]
		\[
		= -\partial_{y_2}[(u(t+h,\cdot) - u(t,\cdot) + w(t+h,\cdot) - w(t,\cdot))f_m] .
		\]
		So, using smoothness several times,
		\[
		\lim_{h\rightarrow 0} \frac{1}{h} [\mathcal L_n(t+h;u,w) - \mathcal L_n(t;u,w)]f_m
		\]
		\[
		= 		-\lim_{h\rightarrow 0} \frac{1}{h} \partial_{y_2}[(u(t+h,\cdot) - u(t,\cdot) + w(t+h,\cdot) - w(t,\cdot))f_m]
		\]
		$$
		= -\partial_{y_2}[(\partial_t u(t,\cdot) + \partial_t w(t,\cdot))f_m ] 
		\rightarrow -\partial_{y_2}[(\partial_t u(t,\cdot) + \partial_t w(t,\cdot))f ] 
		$$
		in $\xLtwo(Y;\mathbb R)$ from the earlier convergence of $f_m \rightarrow f$ in $D(\mathcal L_n)$ under $||\cdot||_{\xHtwo(Y;\mathbb R)}$, which completes the proof.
	\end{proof}
	\begin{remark}
		\label{remark1}
		Note that from the structure of $\mathbf L_n$ and of $\partial Y$, $D(\mathcal A) = D(\mathcal L_n)$.
	\end{remark}
	\begin{lemma}
		The family of operators $\{\mathcal L_n(t;u,w)\}_{t\in [0,T]}$ defined in (\ref{approximator}) with domain $D(\mathcal L_n)$ is a stable family of generators of $\omega-$contractive semigroups on $\xLtwo(Y;\mathbb R)$  for some $\omega > 0$ at every $n \in \mathbb N_1$ for every $$(u,w) \in \xCone([0,T] \times {\text{cl }Y};\mathcal U \times \mathcal W)$$ s.t. $u(t,\cdot) = w(t,\cdot) = 0 \text{ on } \partial Y$ for every $t \in [0,T]$.
	\end{lemma}
	
	\begin{proof}
		The proof largely follows the proof of Theorem 7.4.5. of \cite{evans2010partial}, modified to treat the Neumann conditions we have in our problem.
		
		\noindent
		\textbf{Step 1. Density of $D(\mathcal L_n)$ in $\xLtwo(Y;\mathbb R)$.}
		
		Recall our Remark \ref{remark1}. Note that  $\mathcal D(Y;\mathbb R) \equiv \xCinfty_0(Y;\mathbb R)$ is dense in $L^2(Y;\mathbb R)$, and that $\xCinfty_0(Y;\mathbb R) \subset D(\mathcal L_n) \subset \xLtwo(Y;\mathbb R)$. So, $D(\mathcal L_n)$ is dense in $\xLtwo(Y;\mathbb R)$.
		
		\noindent
		\newline
		\newline
		\textbf{Step 2. Closure of $\mathcal L_n(t;u,w)$.} 	
		
		The operator $\mathcal L_n(t;u,w)$ is closed iff its dense domain $D(\mathcal L_n)$ is a Banach space under:
		\[
		||f||_{D(\mathcal L_n)}:= ||f||_{\xLtwo(Y;\mathbb R)} + ||\mathcal L_n(t;u,w)f||_{\xLtwo(Y;\mathbb R)},
		\]
		which is termed the graph norm.
		
		Clearly, $D(\mathcal L_n)$ is a linear space. $||\cdot||_{\xLtwo(Y;\mathbb R)}$ is obviously a norm over $D(\mathcal L_n)$, as $D(\mathcal L_n) \subset \xLtwo(Y;\mathbb R)$. If $||\mathcal L_n(t;u,w)(\cdot)||_{\xLtwo(Y;\mathbb R)}$ is a seminorm over $D(\mathcal L_n)$, then the graph norm shall be a true norm, as the inclusion of $||\cdot||_{\xLtwo(Y;\mathbb R)}$ will take care of the positive-definite/point-separating property we require. From the triangle inequality for the $\xLtwo(Y;\mathbb R)$ norm,
		\[
		||\mathcal L_n(t;u,w)[f_1 + f_2]||_{\xLtwo(Y;\mathbb R)} \leq ||\mathcal L_n(t;u,w)f_1||_{\xLtwo(Y;\mathbb R)} + 
		||\mathcal L_n(t;u,w)f_2||_{\xLtwo(Y;\mathbb R)},
		\]
		and from the linearity of $\mathcal L_n(t;u,w)f$, and absolute homogeneity of $\xLtwo(Y;\mathbb R)$:
		\[
		||\mathcal L_n(t;u,w)[cf]||_{\xLtwo(Y;\mathbb R)} = |c| ||\mathcal L_n(t;u,w)f||_{\xLtwo(Y;\mathbb R)}.
		\]
		Thus, $||\mathcal L_n(t;u,w)(\cdot)||_{\xLtwo(Y;\mathbb R)}$ is a seminorm, and the graph norm is a norm over $D(\mathcal L_n)$.
		So, $\{D(\mathcal L_n), ||\cdot||_{D(\mathcal L_n)} \}$ is a normed space. 
		
		Now we show $D(\mathcal L_n)$ is complete under the topology of $||\cdot||_{D(\mathcal L_n)}$. The following computations are somewhat routine, but we include them for the purpose of rigor and completeness. 
		
		Take a sequence $\{f_n \}_{n=1}^\infty \subset D(\mathcal L_n)$ s.t. for some $l,m > M$:
		\[
		||f_l - f_m||_{D(\mathcal L_n)} < \tilde \epsilon
		\]
		i.e. $\{f_m \}_{m=1}^\infty$ is Cauchy in the space $\{D(\mathcal L_n), ||\cdot||_{D(\mathcal L_n)} \}$. Then, we can take some $0 < \epsilon_1, \epsilon_2 < \tilde \epsilon$ s.t. $\epsilon_1 + \epsilon_2 = \tilde \epsilon$, so:
		\[
		||f_l- f_m||_{D(\mathcal L_n)} = ||f_l - f_m||_{\xLtwo(Y;\mathbb R)} + ||\mathcal L_n(t;u,w)[f_l - f_m]||_{\xLtwo(Y;\mathbb R)} < \epsilon_1 + \epsilon_2,
		\]
		so let:
		\[
		||f_l - f_m||_{\xLtwo(Y;\mathbb R)} < \epsilon_1, ||\mathcal L_n(t;u,w)[f_l - f_m]||_{\xLtwo(Y;\mathbb R)} < \epsilon_2.
		\]
		Then, there is a re-ordering $\{f_{m_j}\}_{j=1}^\infty$ s.t.
		\[
		||f_{m_{j+1}} - f_{m_{j}}||_{\xLtwo(Y;\mathbb R)} < 2^{-j}, ||\mathcal L_n(t;u,w)[f_{m_{j+1}} - f_{m_j}]||_{\xLtwo(Y;\mathbb R)} < 2^{-j}.
		\]
		Define:
		\[
		g_{q}:= |f_{m_1}| + \sum_{j=1}^q |f_{m_{j+1}} - f_{m_{j}}|.
		\]
		Then,
		\[
		||g_{q}||_{\xLtwo(Y;\mathbb R)} \leq ||f_{m_{1}}||_{\xLtwo(Y;\mathbb R)} + \sum_{j=1}^q ||f_{m_{j+1}} - f_{m_{j}}||_{\xLtwo(Y;\mathbb R)}.
		\]
		\[
		\leq ||f_{m_{1}}||_{\xLtwo(Y;\mathbb R)} + \sum_{j=1}^\infty ||f_{m_{j+1}} - f_{m_{j}}||_{\xLtwo(Y;\mathbb R)}
		\]
		\[
		= ||f_{m_{1}}||_{\xLtwo(Y;\mathbb R)} + 1 =: C.
		\]
		$g_q$ is thus measurable, and $0 \leq g_1 \leq g_2 \leq ...$ and so on. Thus, $0 \leq g_1^2 \leq g_2^2 \leq ...$, and so on.
		So, $||g_q||_{\xLtwo(Y;\mathbb R)}^2 \leq C^2$. Let  $g:= \lim_{q \rightarrow \infty} g_q$. $g$ is also measurable, and by the monotone convergence theorem,
		\[
		||g||_{\xLtwo(Y;\mathbb R)}^2 = \lim_{q \rightarrow \infty}||g_q||^2_{\xLtwo(Y;\mathbb R)} \leq C^2,
		\]
		and $g(x) < \infty$ almost everywhere. So, $f_{m_1}(x) + \sum_{j=1}^l f_{m_{j+1}}(x) - f_{m_{j}}(x)$ converges absolutely almost everywhere, and $\lim_{j \rightarrow \infty} f_{m_j}(x)$ exists almost everywhere. Let:
		\[
		f(x):= \begin{cases}
			\lim_{j \rightarrow \infty} f_{m_j}(x) &\text{ if } g(x) < \infty \\
			0 &\text{ else }
		\end{cases}.
		\]
		So, $f$ is measurable, and $f_{m_{j}}(x) \rightarrow f(x)$ almost everywhere, and $|f| \leq g$. Thus,
		$||f||_{\xLtwo(Y;\mathbb R)} \leq ||g||_{\xLtwo(Y;\mathbb R)} \leq C^2$, so $f \in \xLtwo(Y;\mathbb R)$. Now,
		\[
		|f_{m_j} - f| \leq (|f_{m_j}| + |f|)^2 \leq 4 |g|^2
		\]
		and $|f_{m_j}(x) - f(x)| \rightarrow 0$ almost everywhere, so by the dominated convergence theorem,
		\[
		||f_{m_j} - f||_{\xLtwo(Y;\mathbb R)}^2 \rightarrow 0.
		\]
		Now, to show that if $||\mathcal L_n(t;u,w)[f_m - f_l]||_{\xLtwo(Y;\mathbb R)} < \epsilon_2$, then
		\[
		||\mathcal L_n(t;u,w)[f_m - f]||_{\xLtwo(Y;\mathbb R)} \rightarrow 0,
		\]
		define:
		\[
		\phi_m := 	\mathcal L_n(t;u,w)f_m,
		\]
		and
		\[
		\gamma_q := |\phi_{m_1}| + \sum_{j=1}^q |\phi_{m_{j+1}} - \phi_{m_{j}}|.
		\]
		Then, in the computations we performed earlier involving $f_{m_j}$, $g_q$, replace $f_{m_j}$ by $\phi_{m_j}$, $g_q$ by $\gamma_q$, and so on, and the conclusion of the previous argument follows for $||\mathcal L_n (\cdots)[\cdot]||_{\xLtwo(Y;\mathbb R)}$.
		
		Now, since $\mathcal L_n(t;u,w)f \in \xLtwo(Y;\mathbb R)$, from the construction of the operator $\mathcal L_n(t;u,w)$, we also have that $\partial^2_{y_i} f$ are in $\xLtwo(Y;\mathbb R)$. $Y$ is already circular in $y_1$, and in the other, we can consider periodic extensions about $y_2 = 0$ and $y_2 = s_{max}$. So, for each function $\psi \in \xLtwo(Y;\mathbb R)$, we can represent $\psi$ as a Fourier series (now using $i$ as the imaginary unit):
		\[
		\psi(\cdot) = \sum_{l,m \in \mathbb Z} c_{l,m} e^{(\lambda_{l,1} (\cdot)_1 + \lambda_{m,2}(\cdot)_2 )i }
		\]
		where $\lambda_{l,1}:= 2 \pi l/\mathcal L, \lambda_{m,2} := 2 \pi m/s_{max}$, and:
		\[
		c_{l,m}:= \frac{1}{\mathfrak L s_{max}}\langle \psi,  e^{(\lambda_{l,1}(\cdot)_1 + \lambda_{m,2}(\cdot)_2 )i }\rangle_{\xLtwo(Y;\mathbb R)}.
		\]
		The Fourier series of course only converges in the sense of $L^2(Y;\mathbb R)$ to $\psi$ rather than other stronger senses.
		From Parseval-Plancharel, we have for $\psi \in L^2(Y;\mathbb R)$:
		\[
		||\psi||_{\xLtwo(Y;\mathbb R)}^2 = \sum_{l,m \in \mathbb Z} |c_{l,m}|^2.
		\]
		If $\partial_{y_1}^{\alpha} \psi \in \xLtwo(Y;\mathbb R)$, then:
		\[
		\partial_{y_1}^{\alpha}\psi = \sum_{l,m \in \mathbb Z} c_{l,m} (i \lambda_{l,1})^{\alpha}e^{(\lambda_{l,1} y_1 + \lambda_{m,2}y_2 )i }
		\]
		and analogously for $\partial_{y_2}^\alpha$. Also:
		\[
		||\partial_{y_1}^{\alpha} \psi||_{\xLtwo(Y;\mathbb R) }^2 =\sum_{l,m \in \mathbb Z} |c_{l,m}|^2| \lambda_{l,1}|^{2\alpha},
		\]
		and again analogously for $\partial_{y_2}$.
		
		Note that for functions $\psi \in \xHn{p}(Y;\mathbb R)$, $p \in \mathbb N_1$, using periodic extensions, we have that:
		\[
		\sum_{l,m \in \mathbb Z} |c_{l,m}|^2 (1 + |\lambda_l^{m,\top} \lambda_l^m|^{p})   < \infty
		\]
		where $\lambda_l^m = (\lambda_{l,1}, \lambda_{m,2})^\top$  \cite{taylor2010partial}.

		So, let $f = \psi$. We know $f, \partial_{y_1}^2 f, \partial_{y_2}^2 f \in \xLtwo(Y;\mathbb R)$. It follows:
		\[
		\sum_{l,m \in \mathbb Z} |c_{l,m}|^2(1 + \lambda_{l,1}^{4} + \lambda_{m,2}^{4}) < \infty.
		\]
		Now, since $2 \lambda_{l,1}^{2}\lambda_{m,2}^{2} < 1 +  \lambda_{l,1}^{4} + \lambda_{m,2}^{4}$:
		\[
		\sum_{l,m \in \mathbb Z} |c_{l,m}|^2 2(\lambda_{l,1}^{2}\lambda_{m,2}^{2}) < \infty,
		\]
		Thus, 
		\[
		\sum_{l,m \in \mathbb Z} |c_{l,m}|^2 (1 + |\lambda_l^{m,\top} \lambda_l^m|^{2})  < \infty.
		\]
		So, $f \in \xHtwo(Y;\mathbb R)$. Now, to check that $f$ satisfies the boundary conditions, let:
		\[
		b_m:= (\nabla_y^\top f_m) \mathbf L_n \mathbf n |_{\partial Y}, b:= (\nabla_y^\top f)\mathbf L_n \mathbf n |_{\partial Y}
		\] 
		Since $f_m \in D(\mathcal L_n)$, $b_m = 0$ almost everywhere. Thus,
		\[
		-e^{-m} < ||b_m||_{\xLtwo(\partial Y;\mathbb R)}^2 < e^{-m}.
		\]
		By the squeeze theorem, it follows that:
		\[
		\lim_{m \rightarrow \infty} ||b_m||_{\xLtwo(\partial Y;\mathbb R)}^2 = 0,
		\]
		and by the dominated convergence theorem (using $g(y) = 1$ to dominate $b_m^2$ on $\partial Y$), it follows that:
		\[
		\lim_{m \rightarrow \infty} ||b_m||_{\xLtwo(\partial Y;\mathbb R)}^2 = ||b||_{\xLtwo(\partial Y;\mathbb R)}^2,
		\]
		so
		\[
		||b||_{\xLtwo(\partial Y;\mathbb R)} = 0,
		\]
		hence $b = (\nabla_y^\top f)\mathbf L_n \mathbf n |_{\partial Y} = 0$ almost everywhere, thus $f \in D(\mathcal L_n)$, and the normed linear space $$\{D(\mathcal L_n), ||\cdot||_{D(\mathcal L_n)} \}$$ is a Banach space, so $\mathcal L_n(t;u,w)$ is closed.
		
		\noindent
		\textbf{Step 3. Invertibility of the resolvent $\mathcal R(\lambda;\mathcal L_n(t;u,w))$.}
		
		The resolvent operator is:
		\[
		\mathcal R (\lambda;\mathcal L_n(t;u,w))f := (\lambda \mathcal I - \mathcal L_n(t;u,w))^{-1}f \text{ for } f \in \xLtwo(Y;\mathbb R),
		\]
		for $\lambda \in \rho(\mathcal L_n(t;u,w))$, the resolvent set of $\mathcal L_n(t;u,w)$, which is the set of $\lambda \in \mathbb R$ s.t. $$\lambda \mathcal I - \mathcal L_n(t;u,w): D(\mathcal L_n) \rightarrow \xLtwo(Y;\mathbb R)$$ is invertible, i.e. every $\lambda \in \mathbb R$ where the elliptic problem:
		\begin{equation}
			\begin{split}
				\label{elliptic}
				&(\lambda \mathcal I - \mathcal L_n(t;u,w)) \phi = f \text{ in } \text{int }Y; \\
				&(\nabla_y^\top \phi) \mathbf L_n \mathbf n = 0 \text{ on } \partial Y
			\end{split}
		\end{equation}
		has a unique solution $\phi \in D(\mathcal L_n)$ for any given $f \in \xLtwo(Y;\mathbb R)$. It is a simple matter to verify that $\lambda \mathcal I - \mathcal L_n(t;u,w)$ is strongly uniformly elliptic.  Select (based on the conditions in \cite{ern2021finite}): 
		\[
		\lambda \geq \omega > |\text{ess} \inf_{y \in Y} \partial_{y_2} (u(t,y)+w(t,y))| + \frac{1}{2}(s_{max} + \text{ess} \sup_{y \in Y} [u(t,y)+w(t,y)]).
		\]
		\noindent
		From the conditions on $u,w$ and from periodicity, we have that for $\phi \in \xCinfty (\text{cl } Y;\mathbb R)$:
		
		\begin{equation}
			\begin{split}
				\label{div_theorem}
				\int_Y \phi(y) [(y_2,u(t,y)+w(t,y) \nabla_y \phi(y))] \text{ } dy = - \frac{1}{2} \int_Y \nabla_y \cdot (y_2, u(t,y) + w(t,y)) \phi^2 (y) \text{ } dy,
			\end{split}
		\end{equation}
		\noindent
		which also holds for $\phi \in \xHone(Y;\mathbb R)$ by density. Define the bilinear form 
		\[
		B[\cdot,\cdot;\lambda]:\xHone(Y;\mathbb R) \times \xHone(Y;\mathbb R) \rightarrow \mathbb R:
		\]
		
		\begin{equation}
			\begin{split}
				B[\phi,\psi;\lambda] := &\int_Y \Big ( \nabla_y^\top \phi \mathbf L_n \nabla_y \psi + \nabla_y^\top \phi \text{ } g(t,y) \psi + \phi (\partial_{y_2}(u(t,y) + w(t,y)) + \lambda)\psi  \text{ } dy.
			\end{split}
		\end{equation}
		
		where $g(t,y):= (y_2, u(t,y)  + w(t,y))^\top$.
		We have that:
		
		\begin{equation}
			\begin{split}
				|B[\phi,\psi;\lambda]| \leq & \lambda_{max}(\mathbf L_n)||\nabla_y \phi||_{\xLtwo(Y;\mathbb R^2)}||\nabla_y \psi||_{\xLtwo(Y;\mathbb R^2)} + ... \\ 
				&(s_{max} + u_{max} + w_{max}) ||\nabla_y \phi||_{\xLtwo(Y;\mathbb R^2)}||\psi||_{\xLtwo(Y;\mathbb R)} + ... \\
				&(\lambda + ||\partial_{y_2}(u(t,\cdot) + w(t,\cdot))||_{\xCzero_b(\text{cl }Y;\mathbb R)}) ||\phi||_{\xLtwo(Y;\mathbb R)}||\psi||_{\xLtwo(Y;\mathbb R)} \\
				&\leq C ||\phi||_{\xHone(Y;\mathbb R)} ||\psi||_{\xHone(Y;\mathbb R)}.
			\end{split}
		\end{equation}
		
		\noindent
		Also using (\ref{div_theorem}) (noting that $\lambda_{min}(\mathbf L_n) = \frac{1}{n}$)
		\[
		B[\phi,\phi;\lambda] \geq \lambda_{min}(\mathbf L_n) ||\nabla_y \phi||_{\xLtwo(Y;\mathbb R^2)}^2 + \mu ||\phi||^2_{\xLtwo(Y;\mathbb R)} = 
		\frac{1}{n} ||\nabla_y \phi||_{\xLtwo(Y;\mathbb R^2)}^2 + \mu ||\phi||^2_{\xLtwo(Y;\mathbb R)}
		\]
		where $\mu := \text{ess} \text{inf}_{y \in Y}[\lambda + \partial_{y_2}(u(t,y) + u(t,y)) - \frac{1}{2}\nabla_y \cdot g(t,y)]$. By our choice of $\lambda,$ $\mu > 0$. Thus, let $C:= \min \{ \frac{1}{n}, \mu\}$, then:
		\[
		B[\phi,\phi;\lambda] \geq C ||\phi||_{\xHone(Y;\mathbb R)}^2.
		\]
		So, the bilinear form $B[\cdot,\cdot]$ is coercive over $\xHone(Y;\mathbb R) \times \xHone(Y;\mathbb R)$. By the Lax-Milgram theorem, the variational problem:
		\[
		B[\phi,\psi;\lambda] = \langle f,\psi \rangle_{\xLtwo(Y;\mathbb R)}, \psi \in \xHone(Y;\mathbb R)
		\]
		has a unique solution $\phi \in \xHone(Y;\mathbb R)$ for every $f \in \xLtwo(Y;\mathbb R)$. By elliptic regularity (Theorem 31.27 of \cite{ern2021finite}), it follows that $\phi \in \xHtwo(Y;\mathbb R)$. So, it follows that for $\lambda \geq \omega$ as chosen, the operator $\lambda \mathcal I - \mathcal L_n(t;u,w)$ is invertible. Thus, the resolvent $\mathcal R(\lambda;\mathcal L_n(t;u,w))$ is also invertible for these $\lambda \geq \omega$.
		\newline
		\textbf{Step 4. Energy Estimates for $\phi$.}
		
		Following the proof of Theorem 6.2.2. of \cite{evans2010partial}, we have that:
		\[
		||\nabla_y \phi||_{\xLtwo(Y;\mathbb R)}^2 \leq B[\phi,\phi;\lambda] + C  ||\phi||_{\xLtwo(Y;\mathbb R)}^2.
		\]
		for some constant $C > 0$. If from before $\omega > C$, then:
		\[
		||\nabla_y \phi||_{\xLtwo(Y;\mathbb R)}^2 \leq B[\phi,\phi;\lambda] + \omega||\phi||_{\xLtwo(Y;\mathbb R)}^2.
		\]
		If not, simply re-select $\omega = C$, and $\mathcal R(\lambda;\mathcal L_n(t;u,w))$ will be invertible for $\lambda \geq \omega$, and the same conclusion as above follows.
		\newline
		\textbf{Step 5. Bounding the Resolvent.}
		
		Take the weak formulation of the previous elliptic problem (\ref{elliptic}):
		\[
		B[\phi,\psi;\lambda] = \langle f,\psi \rangle_{\xLtwo(Y;\mathbb R)}
		\]
		which is solved by $\phi \in D(\mathcal L_n)$ for every $\psi \in H^1(Y;\mathbb R)$ by Lax-Milgram and and  elliptic regularity
		with $\lambda > \omega$. From the energy estimate and letting $\psi = \phi$,
		\[
		||\nabla_y \phi||_{\xLtwo(Y;\mathbb R)}^2 + (\lambda - \omega)  \langle \phi,\phi \rangle_{\xLtwo(Y;\mathbb R)} \leq \langle f,\phi \rangle_{\xLtwo(Y;\mathbb R)}.
		\]
		So,
		\[
		(\lambda - \omega)\langle \phi,\phi \rangle_{\xLtwo(Y;\mathbb R)} \leq \langle f,\phi  \rangle_{\xLtwo(Y;\mathbb R)} \leq ||f||_{\xLtwo(Y;\mathbb R)}||\phi ||_{\xLtwo(Y;\mathbb R)}
		\]
		Now, since $\phi $ is a weak solution to the elliptic problem (\ref{elliptic}): 
		\[
		\phi  = \mathcal R(\lambda;\mathcal L_n(t;u,w))f
		\] 
		with $\lambda > \omega$. Thus:
		\[
		(\lambda - \omega)||\mathcal R(\lambda;\mathcal L_n(t;u,w))f||_{\xLtwo(Y;\mathbb R)}^2 \leq  ||f||_{\xLtwo(Y;\mathbb R)}||\mathcal R(\lambda;\mathcal L_n(t;u,w))f||_{\xLtwo(Y;\mathbb R)},
		\]
		so:
		\[
		||\mathcal R(\lambda;\mathcal L_n(t;u,w))||_{\mathcal L(\xLtwo(Y;\mathbb R);\xLtwo(Y;\mathbb R))} \leq \frac{1}{\lambda - \omega}.
		\]
		
		So, by Corollary 3.8. of \cite{pazy1983semigroups}, a version of the Hille-Yosida theorem, $\mathcal L_n(t;u,w)$ generates an $\omega$-contractive semigroup over $\xLtwo(Y;\mathbb R)$. Since this holds for every $t \in [0,T]$, this completes the proof of the original claim.
	\end{proof}
	\begin{lemma}
		Define:
		\[
		(\phi, (u, w)) \in B(0,\mathfrak R) \subset \Big ( \xLtwo(Y;\mathbb R) \times \xCzero([0,T] \times Y;\mathbf U \times \mathbf W) \Big )=:X,
		\] 
		
		\noindent
		$\mathfrak R > 0$ with the norm:
		\[
		||(\phi,(u,w))||_X:= ||\phi||_{\xLtwo (Y;\mathbb R)} + \sup_{(t,y) \in [0,T]\times Y}||(u(t,y),w(t,y)) ||_{\mathbb R^2}.
		\]
		The mapping:
		$
		( \cdot,-\mathcal A^*(t;\cdot,\cdot)\psi )_{\mathcal D^*(Y;\mathbb R),\mathcal D(Y;\mathbb R)} : X \rightarrow \mathbb R
		$
		\begin{equation*}
			\begin{split}
				&( \phi ,-\mathcal A^*(t;u,w)\psi )_{\mathcal D^*(Y;\mathbb R),\mathcal D(Y;\mathbb R)} := \int_Y \epsilon \phi \partial_{y_2}^2 \psi  + y_2 \phi \partial_{y_1} \psi + (u + w)\phi \partial_{y_2} \psi \text{ } dy
			\end{split} 
		\end{equation*}
		for fixed $\psi \in \mathcal D(Y;\mathbb R)$ is locally Lipschitz in $X$.

	\end{lemma}
	\begin{proof}
		First, recall that the finite sum of Lipschitz functions is also Lipschitz. Note that the pairing given is a sum of three terms, the first two of which are bounded and linear in $\phi$ and constant w.r.t. $(u,w)$, and the final term is bilinear in $(\phi,(u,w))$. The first two terms are trivially Lipschitz in $\{X, ||\cdot||_X\}$. As for the last term, let 
		\begin{equation}
			B[\phi,(u,w)]:= \int_Y (u + w)\phi \partial_{y_2} \psi \text{ } dy.
		\end{equation}
		
		Then, we have
		
		\begin{equation*}
			\begin{split}
				&|B[\phi_1, (u_1,w_1)] - B[\phi_2, (u_2,w_2)]| \leq |B[\phi_1,(u_1,w_1)] - B[\phi_1,(u_2,w_2)]| + ... \\
				&|B[\phi_1, (u_2,w_2)] - B[\phi_2,(u_2,w_2)]| = ... \\ 
				& |B[\phi_1, (u_1 - u_2, w_1 - w_2)]| + |B[\phi_1 - \phi_2, (u_2,w_2)]| \leq ... \\ 
				&\Big (||\phi_1||_{\xLtwo(Y;\mathbb R)} ||(u_1-u_2,w_1-w_2)||_{\xCzero([0,T] \times Y;\mathbf U \times \mathbf W)} + ... \\ 
				&||\phi_1 - \phi_2||_{L^2(Y;\mathbb R)}||(u_2,w_2)||_{\xCzero([0,T] \times Y;\mathbf U \times \mathbf W)}\Big ) ||\psi||_{\xCone(Y;\mathbb R)}.
			\end{split} 
		\end{equation*}
		
		\noindent Now, let 
		\[
		L:= \max \{\epsilon ||\psi||_{\xCtwo(Y;\mathbb R)},s_{max}||\psi||_{\xCone(Y;\mathbb R)}, ||\psi||_{\xCone(Y;\mathbb R)} \max \{ \mathfrak R, u_{max} + w_{max}\} \}.
		\] 
		Then, 
		\[
		|B[\phi_1, (u_1,w_2)] - B[\phi_2, (u_2,w_2)]| \leq L||(\phi_1,u_1,w_1) - (\phi_2, u_2, w_2)||_X
		\]
		so $B[\cdot,\cdot]$ is Lipschitz in $\{ X, ||\cdot||_X\}$. 
	\end{proof}
	With these preliminary results completed, we now return to the task of proving Theorem 1. We define sequences of approximate solutions $\{(A,B)^n(\cdot) \}_{n=1}^\infty, (A,B)^n(\cdot): [0,T] \rightarrow \mathbb R^{K \times K} \times \mathbb R^{K \times K}$, and 
	$\{\rho^n(\cdot) \}_{n=0}^\infty, \rho^n : [0,T] \times Y \rightarrow \mathbb R.$ 
	Let:
	\[
	\rho^0(t,\cdot) = \rho_0 \text{ } \forall \text{ } t \in [0,T].
	\]
	\textbf{Step 1. First Pass for the Weights.}

	Let $(A,B)^1(\cdot)$ be subject to the ODEs:
	
	\begin{equation}
		\begin{split}
			&\xDrv  {A^1(t)}{t} = \mathfrak G(A^{1}(t), B^{1}(t), \rho^{0}(t)), \text{ }A^1(0) = A_{0}, \\
			&\xDrv  {B^1(t)}{t} = \mathfrak H(A^{1}(t), B^{1}(t), \rho^{0}(t)), \text{ }B^1(0) = B_{0}.
		\end{split} 
	\end{equation}
	
	Lemma 1 gives local Lipschitzianity of the vector fields $\mathfrak G(\cdot, \rho^0(t)), \mathfrak H(\cdot,\rho^0(t))$ in $A,B$. So, by Picard-Lindel\"of, there is a unique local classical solution to this ODE system of type $\xCone([0,t^*(A_0,B_0));\mathbb R^{K \times K} \times \mathbb R^{K \times K})$. Note from (\ref{bounds}), we have that (using real constant $C$ generically):
	\[
	\mathfrak G(A,B,\rho) \leq C(1 + \frac{1}{C}(||A||_F + ||B||_F))
	\]
	for $C > 0$, and similarly for $\mathfrak H$. So, from the Gr\"onwall-Bellman inequality, for fixed $s,t \in \mathbb R_0^+, 0 \leq s \leq t$:
	
	\begin{equation*}
		\begin{split}
			&||A^1(t)||_F + ||B^1(t)||_F \leq  ||A^1(s)||_F + ||B^1(s)|| + ... \\ 
			&  \int_s^t ||\mathfrak G(A^1(\tau),B^1(\tau), \rho^{0}(t))||_F +  ||\mathfrak H(A^1(\tau),B^1(\tau), \rho^{0}(t))||_F \text{ } d\tau \leq ...  \\
			&  ||A^1(s)||_F + ||B^1(s)|| + 2 \int_s^t C(1 + \frac{1}{C}(||A^1(\tau)||_F + ||B^1(\tau)||_F)) \text{ } d \tau,
		\end{split} 
	\end{equation*}
	so (using constant $C$ generically):
	\begin{equation*}
		||A^1(t)||_F + ||B^1(t)||_F \leq (||A^1(s)||_F + ||B^1(s)||_F + C(t-s))\exp(t-s).
	\end{equation*}
	
	This rules out finite-time blowup, so there is a unique global classical solution to the ODE system of type
	$(A,B)^1 \in \xCone([0,T];\mathbb R^{K \times K} \times \mathbb R^{K \times K})$. 
	Define for $n \geq 1$:
	
	\begin{equation*}
		\begin{split}
			& \tilde{\mathcal V}^{A^n,B^n}(t,y) := \sum_{i=0}^{K-1} \sum_{j=0}^{K-1} \big ( a^n_{ij}(t) \sin(\frac{2 \pi i y_1}{\mathfrak L}) + b^n_{ij}(t)\cos(\frac{2 \pi i y_1}{\mathfrak L}) \big) \cos(\frac{2\pi j  y_2}{s_{max}} ),
		\end{split}
	\end{equation*}
	
	\[
	\tilde u^n(\cdot) := \tilde u(\partial_{y_2} \tilde{\mathcal V}^{A^n,B^n}(\cdot)), 
	\tilde w^n(\cdot) := \tilde w(\partial_{y_2} \tilde{\mathcal V}^{A^n,B^n}(\cdot)).
	\]
	Note that our choice of $\tilde u^n, \tilde w^n$ satisfy the assumptions we needed for $u,w$ in Lemma 3.
	
	\noindent
	\textbf{Step 2. First Pass for the Density.}
	
	We seek a classical solution to abstract problem:
	\begin{equation}
		\begin{split}
			\label{firstone}
			&\xDrv  {\rho^1(t)}{t} =  \mathcal L_1(t;\tilde u^1(\cdot),\tilde w^1(\cdot)) \rho^1(t) \text{ in }   (0, T] \times \text{int } Y; \\
			& \rho^1(0) = \rho_0 \in D(\mathcal L_1). \\
		\end{split} 
	\end{equation}
	We present Theorem 5.5.3. from \cite{pazy1983semigroups} to aid us now:
	\begin{theorem}
		(\textbf{Pazy})
		Let $U$ be a Banach space, and $\{ \mathbf A(t)\}_{t \in [0,T]}$ be a stable family of generators of $C_0$-semigroups on $U$ s.t. $\mathbf A(t): D \subset U \rightarrow U$ for each $t \in [0,T]$, and for each $f \in D$, $\mathbf A(t)f \in C^1([0,T];U)$. If $g \in C^1([0,T];U)$, then for every $x_0 \in D$, the abstract Cauchy problem:
		\[
		\xDrv  {x(t)}{t} = \mathbf A(t)x(t) + g(t), \text{ }x(0) = x_0
		\]
		has a unique classical solution in the sense of $U$: 
		\[
		x \in \xCzero([0,T];D) \cap \xCone([0,T];U)
		\] 
		where for each $t \in [0,T]$:
		\[
		x(t) = \mathbf \Phi_{\mathbf A}(t,0)x_0 + \int_{0}^t \mathbf \Phi_{\mathbf A}(t,s) g(s)  ds,
		\]
		where $\mathbf \Phi_{\mathbf A}(\cdot,\cdot) : [0,T] \times [0,T] \rightarrow \mathcal L(D;D)$ is the evolution system \cite{pazy1983semigroups} associated to 
		$\{\mathbf A(t)\}_{t \in [0,T]}$.
		If $\mathbf A(t)$ each generate $\omega$-contractive $C_0$ semigroups, then:
		\[
		||\mathbf \Phi_{\mathbf A}(t,s)||_{\mathcal L(D;D)} \leq  e^{\omega (t-s)}.
		\]
	\end{theorem}
	This result is the infinite-dimensional analogue of the usual finite-dimensional LTV systems theory \cite{brockett2015finite}.
	
	Let $D:=  D(\mathcal L_1), U:=\xLtwo(Y;\mathbb R) $.  We have from Lemma 2 for $n \geq 1$ that 
	$\mathcal L_n(t;u,w)f \in \xCone([0,T];U)$ for each $f \in D$. 
	Now, we invoke Lemma 3 and Theorem 2, and conclude that: 
	\[
	\rho^1(\cdot) \in \xCzero([0,T];D(\mathcal L_1)) \cap \xCone([0,T]; \xLtwo (Y;\mathbb R))
	\]
	uniquely solves the previously given PDE in the sense of $\xLtwo(Y;\mathbb R)$. Note that 
	$D(\mathcal L_n) = D(\mathcal A) = D$ for every $n \geq 1$ due to the structure of $\partial Y$ as in Remark \ref{remark1}.
	
	\noindent
	\newline
	\newline
	\newline
	\newline
	\newline
	\newline
	\textbf{Step 3. Iteration of Solutions.}
	
	Now, for $n \geq 2$, let:
	\begin{equation}
		\begin{split}
			\label{iterates}
			&\xDrv  {A^n(t)}{t}  = \mathfrak G(A^{n}(t), B^{n}(t),\rho^{n-1}(t)), \text{ }A^{n}(0) = A_{0}; \\
			&\xDrv  {B^n(t)}{t} = \mathfrak H(A^{n}(t), B^{n}(t),\rho^{n-1}(t)), \text{ }B^{n}(0) = B_{0}; \\
			&\xDrv  {\rho^n(t)}{t}  =   \mathcal L_n(t;\tilde u^n(\cdot),\tilde w^n(\cdot)) \rho^n(t) \text{ in }  (0, T] \times \mathcal Y; \\
			& \rho^n(0,\cdot) = \rho_0 \in D(\mathcal L_n).
		\end{split} 
	\end{equation}
	
	First, the ODEs are solved and then PDE, as the coupling between the equations is only one-directional. For the ODE, we note that by continuous differentiability in time of $\rho^{n-1}$, $\mathfrak G(A(\cdot), B(\cdot), \rho^{n-1}(\cdot))$ will also be continuously differentiable in time. By Picard-Lindel\"of there is a local classical solution on $[0,t^{*,n}(A_0,B_0))$, and then we apply our approach to rule out finite-time blowup via Gr\"onwall-Bellman from Section 1 to conclude that there is a classical solution $(A,B)^n(t) \in \xCone([0,T];\mathbb R^{K \times K} \times \mathbb R^{K \times K}).$ Then, we employ the argument of Section 2 to conclude that for each $n \geq 2$, there is a classical solution to the PDE in this iterated system of type:
	\[
	(A, B, \rho(\cdot))^n(\cdot) \in \xCzero([0,T];D(\mathcal L_n) \times \mathbb R^{K \times K} \times \mathbb R^{K \times K}) \cap \xCone([0,T];\mathbb H^{0}).
	\]
	\noindent
	\textbf{Step 4. Boundedness of Approximate Solutions.}
	
	Now, we must show that  there is a limit point of this sequence. We will briefly comment on boundedness of the approximate solutions now. From the bounds given by Pazy in \cite{pazy1983semigroups}, and from a $\omega-$contractivity of $\mathcal L_n(t;\tilde u^n(\cdot),\tilde w^n(\cdot))$,  we know that (using $C$ as an arbitrary non-negative constant):
	\begin{equation}
		\label{ball}
		||\rho^n(t)||_{D(\mathcal L_n)} \leq e^{\omega t} ||\rho_0||_{D(\mathcal L_n)} \leq e^{\omega T} ||\rho_0||_{D(\mathcal L_n)}\leq C ||\rho_0||_{\xHtwo(Y;\mathbb R)} =: \mathfrak R,
	\end{equation}
	\noindent
	since $D(\mathcal L_n) \subset \xHtwo(Y;\mathbb R)$.
	As for $A^n,B^n$, we have from the proof of Lemma 1:
	\begin{equation*}
		\begin{split}
			&||A^n(t)||_{F} + ||B^n(t)||_{F} \leq ... \\ 
			& ||A_0||_{F} + ||B_0||_{F} + \int_{0}^t || \mathfrak G(A^{n}(s), B^{n}(s), \rho^{n-1}(s)) ||_F ds + ... \\
			&\int_{0}^t ||\mathfrak H(A^{n}(s), B^{n}(s), \rho^{n-1}(s))||_F \text{ } ds \leq ... \\ 
			&||A_0||_{F} + ||B_0||_{F} + Ct +
			\int_{0}^t ||A^n(s)||_F + ||B^n(s)||_F \text{ } ds
		\end{split} 
	\end{equation*}
	\noindent
	again using our result from (\ref{bounds}).
	Now, by the Gr\"onwall-Bellman \cite{evans2010partial} inequality:
	
	\begin{equation*}
		\begin{split}
			&||A^n(t)||_{F} + ||B^n(t)||_{F} \leq (||A_0||_{F} + ||B_0||_{F} + CT)e^{T}=: \mathfrak D,
		\end{split} 
	\end{equation*}
	\noindent
	hence $(A, B, \rho)^n(\cdot)$ is uniformly bounded in the usual supremum-norm topology on 
	$C([0,T]; \mathbb H^{2})$. We also know that our classical solution $\rho^n(\cdot)$ is s.t.:
	\begin{equation}
		\xDrv  {\rho^n(t)}{t} = \mathcal L_n(t;u^n(t), w^n(t))\rho^n(t), 
	\end{equation}
	in $\xLtwo(Y;\mathbb R)$ and from (\ref{ball}):
	\[
	||\xDrv  {\rho^n(t)}{t}||_{\xLtwo(Y;\mathbb R)} = ||\mathcal L_n(t;u^n(t), w^n(t))\rho^n(t)||_{\xLtwo(Y;\mathbb R)} \leq ||\rho^n(t)||_{D(\mathcal L_n)} \leq \mathfrak R.
	\]
	Similarly:
	\begin{equation}
		\xDrv  {A^n(t)}{t}  = \mathfrak G(A^n(t), B^n(t), \rho^{n-1}(t));
	\end{equation}
	\begin{equation}
		\xDrv  {B^n(t)}{t} = \mathfrak H(A^n(t), B^n(t), \rho^{n-1}(t));
	\end{equation}
	and from earlier:
	\begin{equation}
		||\xDrv  {A^n(t)}{t}||_F \leq C(1 + \frac{1}{C}(||A^n(t)||_F + ||B^n(t)||_F)) \leq \mathfrak D;
	\end{equation}
	\begin{equation}
		||\xDrv  {B^n(t)}{t}||_F \leq C(1 + \frac{1}{C}(||A^n(t)||_F + ||B^n(t)||_F)) \leq \mathfrak D.
	\end{equation}
	So, $\xDrv  {(A^n, B^n, \rho^n)}{t}$ is uniformly bounded in $\xLinfty(0,T;\mathbb H^0)$.
	
	\noindent
	\textbf{Step 5. Compact Embeddings and Strong Compactness.}
	
	\noindent
	From \cite{simon1986compact}, we have:
	\begin{theorem} \textbf{(Aubin-Lions-Simon)}
		Let $X_0, X_1, X_2$ be Banach spaces with $X_0 \subseteq X_1 \subseteq X_2$, and $X_0$ compactly embedded in $X_1$, $X_1$ continuously embedded in $X_2$. For $1 \leq q, p \leq \infty$ define:
		\[
		S := \{f \in \xLn{q}(0,T;X_0) : \xDrv  {f}{t} \in \xLn{p}(0,T;X_2)  \}.
		\]	
		\begin{enumerate}
			\item If $q < \infty$, then the embedding of $S$ into $\xLn{p}(0,T;X_1)$ is compact; and
			\item if $q = \infty$, and $p > 1$, then the embedding of $S$ into $\xCzero([0,T];X_1)$ is compact.
		\end{enumerate}
	\end{theorem}

	Now, $\xHn{{p+1}}$ is compactly embedded in $\xHn{p}$ by Rellich-Kondrachov \cite{evans2010partial, gilbarg2015elliptic, brezis2011functional}, and $\xHn{p}$ is likewise compactly, hence continuously, embedded into $\xHn{{p-1}}$. For the embeddings of the finite dimensional parts of $\mathbb H^{(\cdot)}$, recall the Heine-Borel theorem \cite{kolmogorov1975introductory}. Let $X_0:= \mathbb H^2, X_1:= \mathbb H^1, X_2:= \mathbb H^0$. We have from earlier that $\{(A, B, \rho)^{n}(\cdot)\}_{n=1}^\infty$ is bounded in $\xCzero([0,T]; \mathbb H^{2})$, hence in $\xLinfty(0,T;\mathbb H^2)$, and that $\{\xDrv  {(A, B, \rho)^{n}}{t} \}_{n=1}^\infty$ is bounded in $\xLinfty(0,T;\mathbb H^0)$. Thus, $\{(A, B,\rho)^{n}(\cdot)\}_{n=1}^\infty$ is bounded in $S$ under the previous definitions of  $X_0,X_1,X_2$. So, by Theorem 3, there is some $(A, B, \rho)^{*}(\cdot) \in \xCzero([0,T];\mathbb H^1)$ s.t.:
	\[
	(A, B, \rho)^{n}(\cdot) \rightarrow (A, B, \rho)^{*}(\cdot) \text{ strongly in } \xCzero([0,T];\mathbb H^1).  
	\] 
	
	\noindent \textbf{Step 6. Existence of Weak Solutions to ADP ODE-PDE System.}
	Finally, it remains to show that $(A, B, \rho)^{*}(\cdot)$ solve the ADP system (\ref{learning}) in the sense of (\ref{def_1} - \ref{def_4}). From the local Lipschitzianity of $\mathfrak G$ from Lemma 1:
	
	\begin{equation}
		\begin{split}
			&|| \int_0^t  \mathfrak G(A^n(s), B^n(s), \rho^{n-1}(s))- \mathfrak G(A^*(s), B^*(s), \rho^*(s)) \text{ } ds ||_{\mathbb R^{K \times K}} \leq ... \\ 
			& \theta^{-1} L \int_0^t ||(A^n, B^n, \rho^{n-1})(s) - (\rho,A,B)^*(s)||_{\mathbb H^0} \text{ } ds \rightarrow 0
		\end{split} 
	\end{equation}
	where $L:= \max \{ L_1, L_2\}$.
	and similarly for $\mathfrak H$ using the convergence we showed in Step 5. So, (\ref{def_1}, \ref{def_2}) are satisfied.
	Let for $\phi \in \xLtwo(Y;\mathbb R), \psi \in \mathcal D(Y;\mathbb R)$:
	
	\begin{equation*}
		\begin{split}
			&(\phi, \mathcal L^*_n(t;u,w)\psi )_{\mathcal D^*(Y;\mathbb R), \mathcal D(Y;\mathbb R)}:= ... \\ 
			& ( \phi, -\mathcal A^*(t;u,w)\psi )_{\mathcal D^*(Y;\mathbb R), \mathcal D(Y;\mathbb R)} + \int_Y \phi \nabla_y \cdot [\mathbf L_n \nabla_y \psi] \text{ } dy
		\end{split} 
	\end{equation*}
	with $\mathcal L^*_n(t;u,w)$ the formal adjoint of $\mathcal L_n(t;u,w)$.
	\noindent
	Since $\rho^n(\cdot)$ is a classical solution to the abstract Cauchy problems we formulated in (\ref{firstone}) and (\ref{iterates}) for each $n \geq 1$, it follows that it is also a weak solution:
	
	\begin{equation*}
		\begin{split}
			&( \rho^n(t), \psi )_{\mathcal D^*(Y;\mathbb R), \mathcal D(Y;\mathbb R)} = ... \\ 
			&( \rho_0, \psi )_{\mathcal D^*(Y;\mathbb R), \mathcal D(Y;\mathbb R)} + \int_0^t ( \rho(s), \mathcal L^*_n(s;\tilde u^n(\partial_{y_2} \tilde{\mathcal V^n}),\tilde w^n(\partial_{y_2}\tilde{\mathcal V}^n) ) \psi )_{\mathcal D^*(Y;\mathbb R), \mathcal D(Y;\mathbb R)} \text{ } ds
		\end{split} 
	\end{equation*}
	
	\noindent for arbitrary $\psi \in \mathcal D(Y;\mathbb R)$. It also follows from Lemma 4, and the smoothness of $\tilde u(\partial_{y_2} \tilde{\mathcal V}^{A,B}),\tilde w(\partial_{y_2} \tilde{\mathcal V}^{A,B})$ w.r.t. $A,B$ that:
	
	\begin{equation*}
		\begin{split}
			&|\int_0^t ( \rho^n(s), \mathcal L^*_n(s;\tilde u(\partial_{y_{2}} \tilde{\mathcal V}^n),\tilde w(\partial_{y_{2}} \tilde{\mathcal V}^n) ) \psi )_{\mathcal D^*(Y;\mathbb R), \mathcal D(Y;\mathbb R)} - ... \\
			&( \rho^*(s), -\mathcal A^*(s;\tilde u(\partial_{y_{2}} \tilde{\mathcal V}^*),\tilde w(\partial_{y_{2}} \tilde{\mathcal V}^*) ) \psi )_{\mathcal D^*(Y;\mathbb R), \mathcal D(Y;\mathbb R)} \text{ } ds | + ... \\ 
			&\int_0^t \frac{1}{n}| ( \rho^n(s),  \nabla_y^2 \psi )_{\mathcal D^*(Y;\mathbb R), \mathcal D(Y;\mathbb R)} | \leq ... \\ 
			&\int_0^t\Big (C||(\rho,A,B)^n(s)-(\rho,A,B)^*(s)||_{\mathbb H^0} \Big ) \text{ } ds + ... \\ 
			&\int_0^t \frac{1}{n}| ( \rho^n(s),  \nabla_y^2 \psi )_{\mathcal D^*(Y;\mathbb R), \mathcal D(Y;\mathbb R)} | \text{ } ds.
		\end{split} 
	\end{equation*}
	\noindent
	Due to the compactness we showed earlier, and since $||f||_{\xHone(Y;\mathbb R)} = (||\nabla_{y}f||_{\xLtwo(Y;\mathbb R^2)}^2 + ||f||_{\xLtwo(Y;\mathbb R)}^2)^{1/2}$, the first term in the last line goes to 0 as $n \rightarrow \infty$. As for the second:
	\[
	\frac{1}{n} |( \rho^n(s),  \nabla_y^2 \psi )_{\mathcal D^*(Y;\mathbb R), \mathcal D(Y;\mathbb R)} | \leq \frac{1}{n} \mathfrak R  ||\psi||_{\xHtwo(Y;\mathbb R)} \leq \mathfrak R  ||\psi||_{\xHtwo(Y;\mathbb R)}, n \geq 1 .
	\]
	Now, use $ \mathfrak R  ||\psi||_{\xHtwo(Y;\mathbb R)}$ to dominate
	$| \frac{1}{n} ( \rho^n(s),  \nabla_y^2 \psi )_{\mathcal D^*(Y;\mathbb R), \mathcal D(Y;\mathbb R)}|$, and by the dominated convergence theorem:
	\[
	\begin{split}
	&\lim_{n\rightarrow \infty} \int_{0}^t | \frac{1}{n} ( \rho^n(s),  \nabla_y^2 \psi )_{\mathcal D^*(Y;\mathbb R), \mathcal D(Y;\mathbb R)} | \text{ }ds 
	= ... \\ 
	&\int_{0}^t \lim_{n\rightarrow \infty} \frac{1}{n}| ( \rho^n(s),  \nabla_y^2 \psi )_{\mathcal D^*(Y;\mathbb R), \mathcal D(Y;\mathbb R)}| \text{ } ds.
	\end{split}
	\]
	Also:
	\[
	0 \leq 	\frac{1}{n} |( \rho^n(s),  \nabla_y^2 \psi )_{\mathcal D^*(Y;\mathbb R), \mathcal D(Y;\mathbb R)} | \leq \frac{1}{n} \mathfrak R  ||\psi||_{\xHtwo(Y;\mathbb R)},
	\]
	so by the squeeze theorem,
	\[
	\lim_{n\rightarrow \infty} \frac{1}{n}| (  \rho^n(s),  \nabla_y^2 \psi )_{\mathcal D^*(Y;\mathbb R), \mathcal D(Y;\mathbb R)} | = 0.
	\]
	So, (\ref{def_3}) is satisfied.
	Finally, we move onto the boundary condition. Define:
	\[
	b_n:= (\nabla_y^\top \rho_n) \mathbf L_n \mathbf n |_{\partial Y}, b^*:= \lim_{n \rightarrow \infty }(\nabla_y^\top \rho^n) \mathbf L_n \mathbf n|_{\partial Y} = (\nabla_y^\top \rho^*) \mathbf L \mathbf n|_{\partial Y}.
	\]
	From each solution $\rho^n$, we know that $b_n(t,y) = 0$ almost everywhere on $[0,T] \times \partial Y$. So:
	\[
	-e^{-n} < ||b_n||_{\xLtwo([0,T ] \times \partial Y;\mathbb R)}^2 < e^{-n},
	\]
	and by the squeeze theorem, $\lim_{n \rightarrow \infty }||b_n||_{\xLtwo([0,T ] \times \partial Y;\mathbb R)}^2 = 0$. Now, we can also dominate $b_n$ by $g(x) = 1$, so it follows:
	\[
	\lim_{n \rightarrow \infty} ||b_n||_{\xLtwo([0,T ] \times \partial Y;\mathbb R)}^2 =  ||b^*||_{\xLtwo([0,T ] \times \partial Y;\mathbb R)}^2 = 0,
	\]
	hence $b^* = 0$ a.e. on $[0,T] \times \partial Y$. So, (\ref{def_4}) is satisfied.

	Thus, the limit point $(A,B,\rho)^*$ is a weak solution to the ADP system. 
	
	\noindent
	\newline
	\textbf{Step 7. Uniqueness.}
	
	As to verify uniqueness, suppose there exists another weak solution $(\bar A, \bar B, \bar \rho) \in \xCzero([0,T];\mathbb H^1)$ to the ADP system which satisfies the previous definition in equations (\ref{def_1}) - (\ref{def_4}), $\bar A(0) = A_0, \bar B(0) = B_0, \bar \rho(0,\cdot) = \rho_0$. Then:
	\[
	||(A, B, \rho)^*(0) - (\bar A, \bar B,\bar \rho)(0)||_{\mathbb H^1} = 0.
	\] 
	We have from the definition of the weak solution (\ref{def_1} - \ref{def_4}) that:
	
	\begin{equation*}
		\begin{split}
			&||(A,B,\rho)^*(t) - (\bar A, \bar B,\bar \rho)(t)||_{\mathbb H^1} \leq  ... \\
			& \int_0^t || \mathfrak G(A^*(s), B^*(s), \rho^*(s))- \mathfrak G(\bar A(s), \bar B(s), \bar \rho(s))||_{\mathbb R^{K \times K}} ds  + ... \\ 
			& \int_0^t  ||\mathfrak H(A^*(s), B^*(s), \rho^*(s))- \mathfrak H(\bar A(s), \bar B(s), \bar \rho(s)) ||_{\mathbb R^{K \times K}} ds  + ... \\
			&\int_0^t |(\rho^*(s), -\mathcal A^*(s;\tilde u(\partial_{y_{2}} \tilde{\mathcal V}^*),\tilde w(\partial_{y_{2}} \tilde{\mathcal V}^*) ) \psi )_{\mathcal D^*(Y;\mathbb R), \mathcal D(Y;\mathbb R)} - ... \\
			&( \bar \rho(s), -\mathcal A^*(s;\tilde u(\partial_{y_{2}} \tilde{\mathcal V}^{\bar A,\bar B}),\tilde w(\partial_{y_{2}} \tilde{\mathcal V}^{\bar A,\bar B}) ) \psi )_{\mathcal D^*(Y;\mathbb R), \mathcal D(Y;\mathbb R)} | ds 
		\end{split} 
	\end{equation*}
	
	Again, from the local Lipschitzianity of $\mathfrak G$ in Lemma 1:
	
	\begin{equation*}
		\begin{split}
			&\int_0^t ||  \mathfrak G(A^*(s), B^*(s),\rho^*(s))-  \mathfrak G(\bar \rho(s), \tilde A(s), \tilde B(s))||_{\mathbb R^{K \times K}} ds  \leq ... \\ 
			&\theta^{-1} L\int_0^t ||(A,B,\rho)^*(s) - (\bar A, \bar B, \bar \rho)(s)||_{\mathbb H^0} ds \leq ... \\ 
			& \theta^{-1} L\int_0^t ||(A,B,\rho)^*(s) - (\bar A, \bar B, \bar \rho)(s)||_{\mathbb H^1} ds
		\end{split} 
	\end{equation*}
	
	and similarly for $\mathfrak H$. From Lemma 4,
	
	\begin{equation*}
		\begin{split}
			&\int_0^t |( \rho^*(s), \mathcal A^*(s;\tilde u(\partial_{y_{2}} \tilde{\mathcal V}^*),\tilde w(\partial_{y_{2}} \tilde{\mathcal V}^*) ) \psi )_{\mathcal D^*(Y;\mathbb R), \mathcal D(Y;\mathbb R)} - ... \\
			&( \bar \rho(s), \mathcal A^*(s;\tilde u(\partial_{y_{2}} \tilde{\mathcal V}^{\bar A,\bar B}),\tilde w(\partial_{y_{2}} \tilde{\mathcal V}^{\bar A,\bar B}) ) \psi )_{\mathcal D^*(Y;\mathbb R), \mathcal D(Y;\mathbb R)} |ds  \leq ...  \\ 
			& C\int_0^t ||(A,B,\rho)^*(s) - (\bar A, \bar B, \bar \rho)(s)||_{\mathbb H^0} ds \leq ... \\ 
			&C\int_0^t ||(A,B,\rho)^*(s) - (\bar A, \bar B, \bar \rho)(s)||_{\mathbb H^1} ds.
		\end{split} 
	\end{equation*}
	
	Thus:
	
	\begin{equation*}
		\begin{split}
			&||(A,B,\rho)^*(t) - (\bar A, \bar B, \bar \rho)(t)||_{\mathbb H^1} \leq ... \\ 
			& (2 \theta^{-1} L + C) \int_0^t ||(A,B,\rho)^*(s) - (\bar A, \bar B, \bar \rho)(s)||_{\mathbb H^1} ds.
		\end{split} 
	\end{equation*}
	
	Let $\beth:= (2 \theta^{-1} L + C)$. By the Gr\"onwall-Bellman inequality:
	$$
	||(A,B,\rho)^*(t) - (\bar A, \bar B, \bar \rho)(t)||_{\mathbb H^1} \leq e^{\beth t}||(A,B,\rho)^*(0) - (\bar A, \bar B, \bar \rho)(0)||_{\mathbb H^1} = 0.
	$$
	So, $(A,B,\rho)^*$ is the unique weak solution to the ADP system. Letting $(A,B,\rho) = (A,B,\rho)^*$ completes the proof of Theorem 1.
\end{proof}
\section{Numerical Results}
\begin{figure}
	\centering
	\includegraphics[scale=.6, trim={0 15 0 34}]{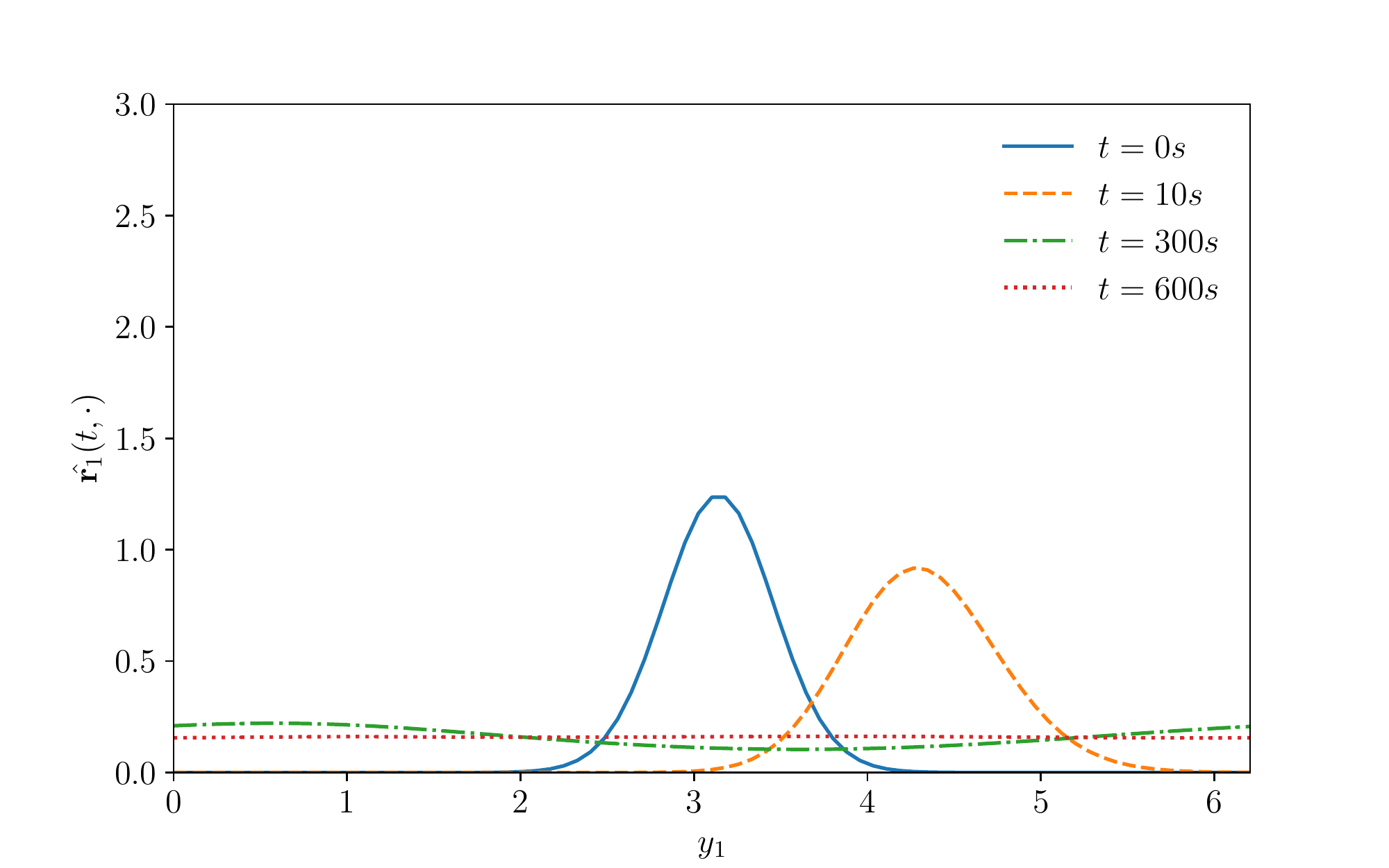}
	\centering
	\includegraphics[scale=.6, trim={0 -30 0 20}]{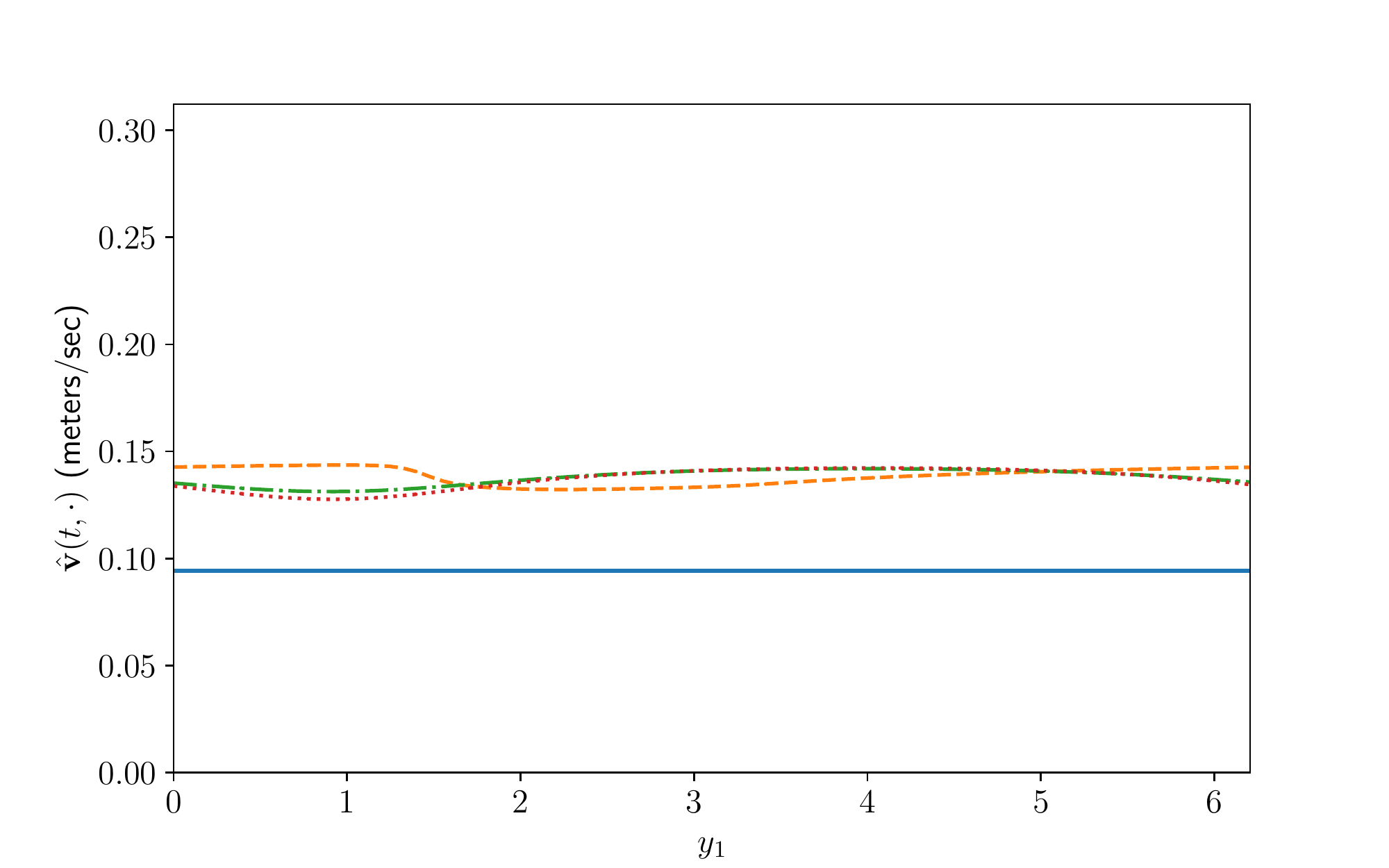}
	\caption{(top) The numerical spatial probability distribution of the traffic flow at different times as a function of space. The road is a closed loop, so the flow on the RHS comes back to the LHS. (bottom) The numerical bulk velocity of the traffic flow at different times as a function of space. (top) and (bottom) share a horizontal axis. At first, a there is a significant slowdown in the traffic flow, but congestion is rapidly dissipated. This slowdown region is later dissipated itself. }
\end{figure}
\begin{figure}
	\centering
	\includegraphics[scale=.6, trim={0 0 0 0}]{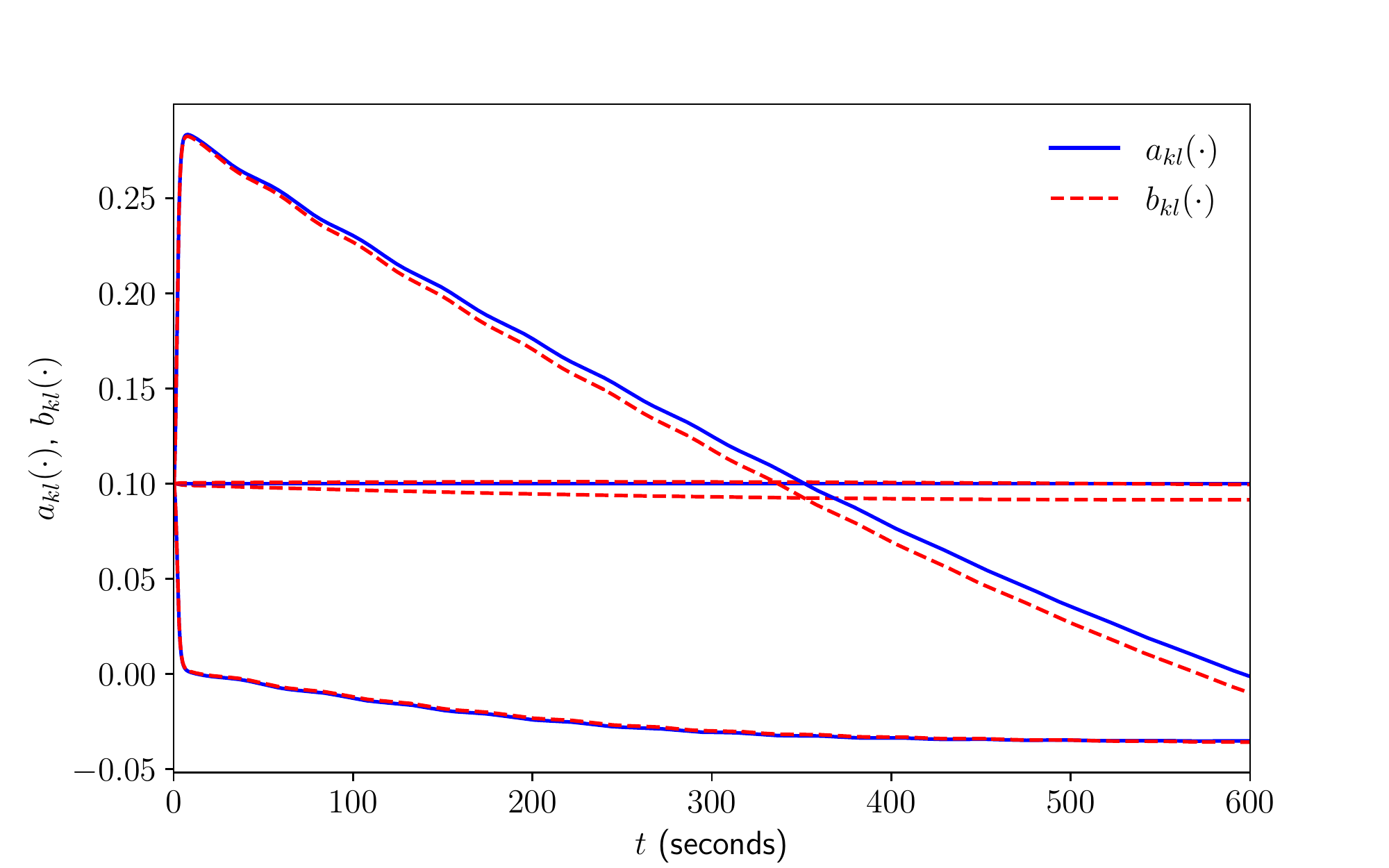}
	\centering
	\includegraphics[scale=.6, trim={0 0 0 0}]{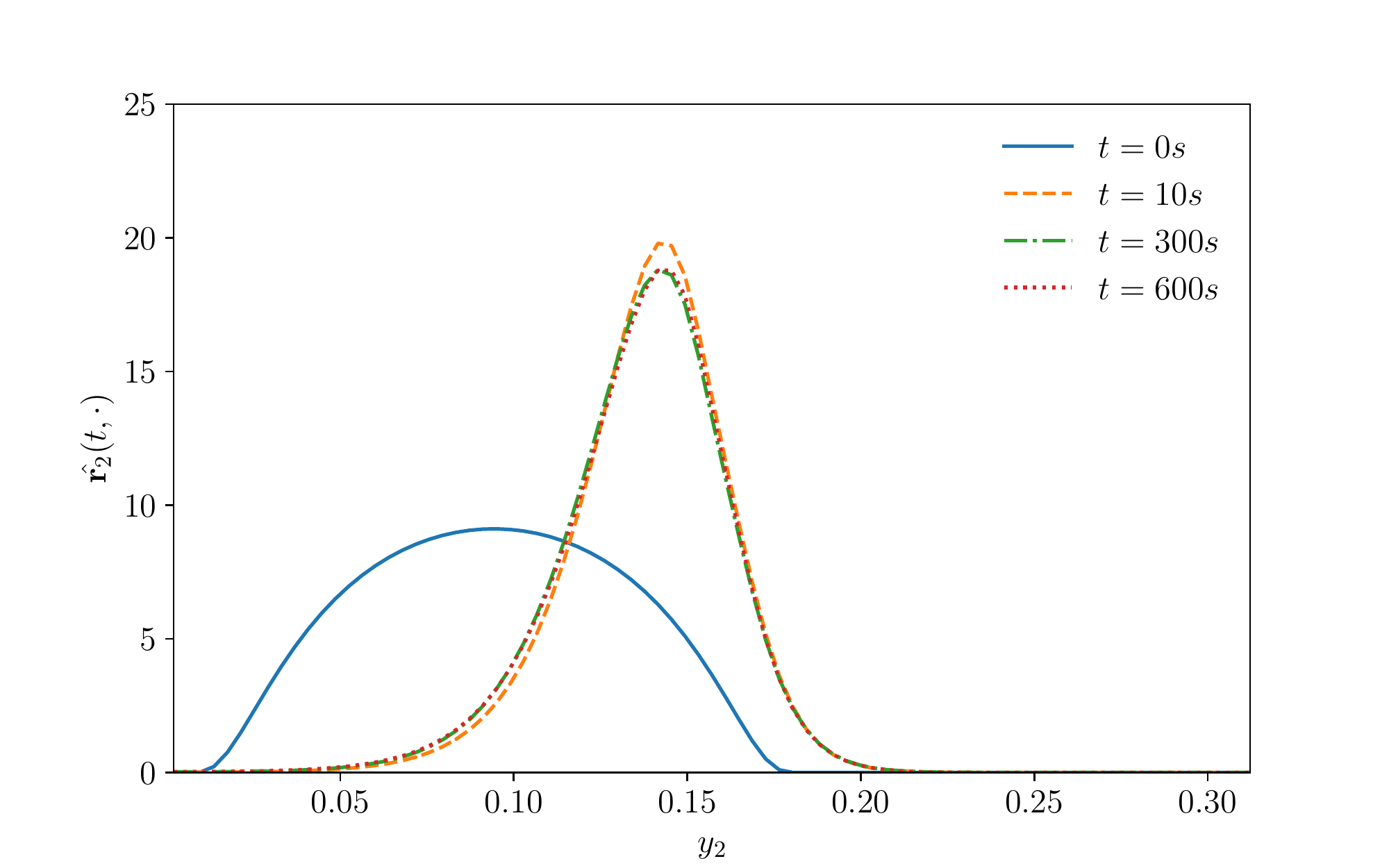}
	\caption{(top) The evolution of the weights of the approximate value function. As time progresses, the weights appear to approach an equilibrium. Note, however, we have not proven stability in this work, only existence and uniqueness. (bottom) The marginal distribution for the speed. The speeds are initially distributed according to a bump function, but later evolve into a much more concentrated distribution at a higher speed.}
\end{figure}
\begin{figure}
	\centering
	\includegraphics[scale=.6, trim={0 0 0 0}]{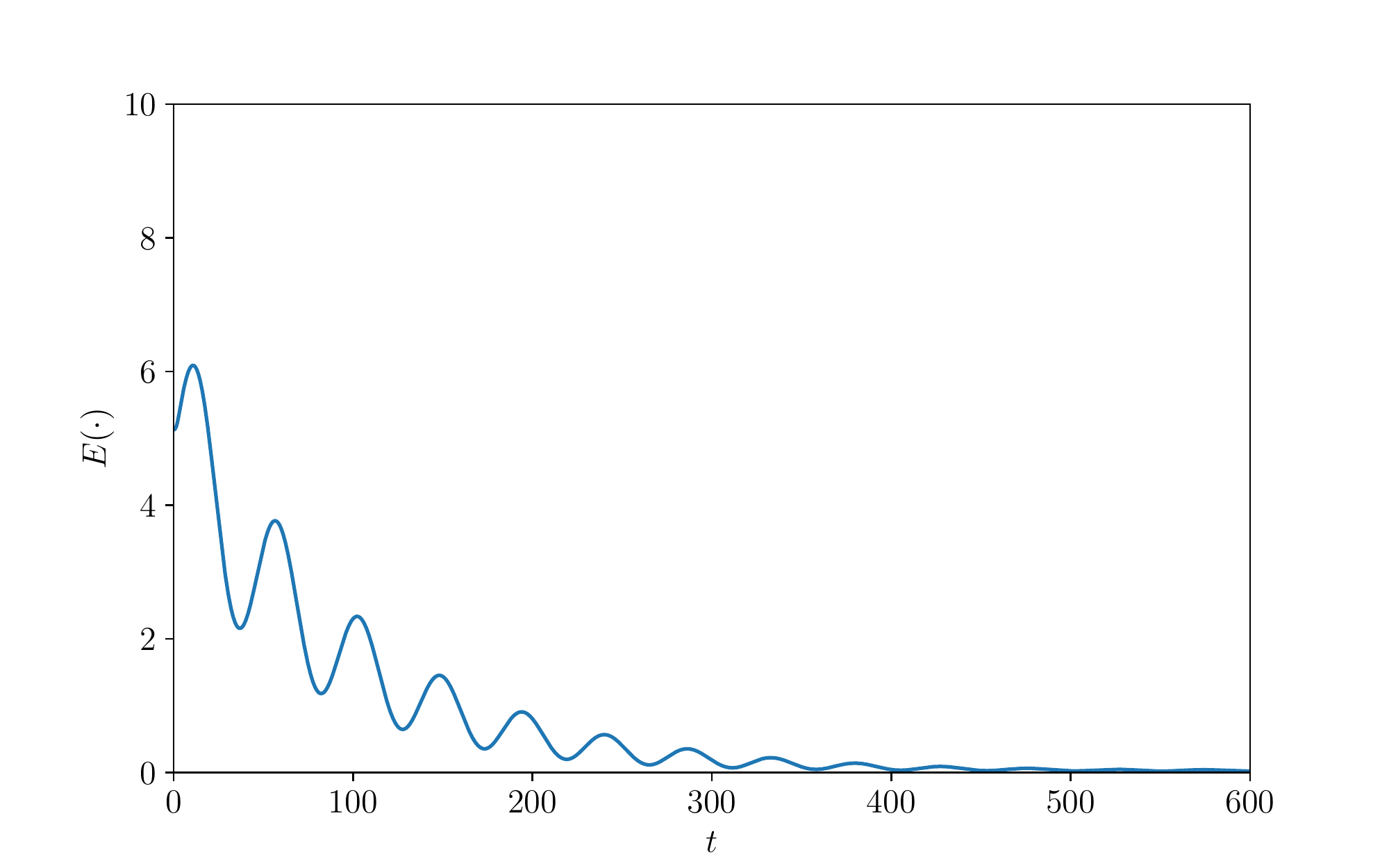}
	\caption{The evolution of the HJB-I error as time progresses. The error oscillates as the vehicles move around the track, and generally decreases over time, indicating that ADP process begins to learn the optimal control and disturbance.}
\end{figure}
\subsection{Model Parameters}
We made the following selections for model parameters. The length of the road was taken to be $\mathfrak L=2 \pi$. $\epsilon = .0005$, $s_{max} = \mathfrak L/20$, $u_{max} = s_{max}/6$,  $w_{max} = u_{max}/10$. $\gamma=10$. We took $\beta = 2$, and $T= 600$. We took $K=2$, so there are 6 (nonzero) basis functions for the value function approximation. $ \alpha =1$. $\phi$ is taken to be:
\[
\phi(y_1,\eta_1):= \sin(2 \pi \frac{y_1 - \eta_1}{\mathfrak L})
\]
and the initial (smooth) probability density is:
\[
\rho_0(y):= \frac{1}{C}e^{10 \cos(2 \pi \frac{y_1-\frac{L}{2}}{L})}\mathbb I_{|{y_{2}} -\frac{3}{10} s_{max}| < \frac{1}{11}}({y_{2}})e^{\frac{1}{|11({y_{2}} -\frac{3}{10} s_{max})|^2 - 1}}
\]
where $C$ is a normalizing constant. We took $\theta^{-1} = 10^{-2}$, and $A_0 = B_0 = \mathbf .1 \cdot \mathbf 1_{3 \times 3}$, where $\mathbf 1_{3 \times 3}$ is the matrix of all ones.
\subsection{Numerical Methods}
Let the numerical solution of the FK equation be piecewise constant, denoted by $\hat \rho: [0,T] \times Y \rightarrow \mathbb R$, and the numerical solution of the weights of the value function be piecewise constant $\hat A, \hat B: [0,T] \rightarrow \mathbb R^{K \times K}$.
For a detailed description of our numerical methods for the FK equation, please refer to our work in \cite{tirumalai2021robust}. In short, we discretize the FK equation in space using the finite volume method. We use the Rusanov numerical fluxes to approximate the hyperbolic part, and the second-order central difference method on the parabolic part. Both the numerical solution of the FK equation and the numerical solution of the value function weights were time-marched using the stability preserving second-order Runge-Kutta scheme \cite{shu1988efficient}. We took our timestep to be $\Delta t = .0025$, and took $81^2$ grid points in position-speed.
\subsection{Discussion of Numerical Results}
To begin this discussion, we first describe some macroscopic quantities related to the vehicle dynamics.
Define the spatial density $\mathbf r_1:[0,T] \times \mathbb T \rightarrow \mathbb R$, and the momentum density $\mathbf j:[0,T] \times \mathbb T \rightarrow \mathbb R$:
\[
\mathbf r_{1} (t,\cdot):= \int_\mathbb R \rho(t,\cdot) \text{ } dy_2,
\]
\[
\mathbf j(t,\cdot):= \int_{\mathbb R} y_2 \rho(t,\cdot) \text{ } dy_2  =: \mathbf r_1 (t,\cdot) \mathbf v(t,\cdot),
\]
under the assumption that $\rho(t,\cdot)$ is extended by $0$ for ${y_{2}} \notin [0,s_{max}]$.
We integrate the FK equation over $y_2 \in \mathbb R$, and we obtain:
\[
\partial_t \mathbf r_1 + \partial_{y_1} (\mathbf r_{1} \mathbf v ) = 0 \text{ in } (0,T] \times \mathbb T
\]
with $\mathbf r_1(t,y_1) =\mathbf r_1(t,y_1 + \mathfrak L)$, and initial conditions obtained from $\rho_0$.  We obtain spatial density $\hat{\mathbf r}_1(t,\cdot)$ and bulk velocity $\hat {\mathbf v}(\cdot,t)$ numerically from $\hat \rho(\cdot,t)$ via Riemann sums. These are plotted in Fig. 1. We also plot the numerical version of:
\[
\mathbf r_2 (t,\cdot):= \int_\mathbb R \rho(t,\cdot) \text{ } dy_1,
\]
in (bottom) of Figure 2. 
Similarly to our result in \cite{tirumalai2021robust}, initially, the bulk velocity $\hat{\mathbf v}$ forms a slowdown region. The vehicles ahead of the congestion spread into the sparsely occupied regions of the road, and as the slowdown region is then gradually evolved so that it dissipates. The speed distribution concentrates at a higher speed than the system was initialized at. As expected, the performance is not as good as the exact dynamic programming approach we took in our previous work \cite{tirumalai2021robust}, and the vehicles move more slowly. However, the computation of the control in our previous work was completely offline. Here, we obtained a control adaptively and on-line. The smoothing of the bulk velocity profile in the bottom of Fig. 1 is indicative of comfortable, fuel-efficient travel. This result indicates some applicability of this control method to more practical scenarios, such as agent-based traffic simulations with observation and estimation of the mean-field distribution.

The dynamics of the HJB-I loss as depicted in Fig. 5 are particularly interesting. At least in simulation, it does not strictly decrease as the system dynamics progresses. The simulation identifies a question which needs answering: where can we initialize the ADP for this system so that the HJB-I loss is strictly decreasing? And, is this even possible? Other initializations which we do not reproduce here seem to lead to divergence, so this is an important question to answer.

\section{Conclusion}
In this paper, we posed a robust discounted horizon mean-field game and obtained the stationary system which provides its solution. We used the HJB-I equation of this system to develop an ADP system, which consists of ODEs for the value function weights, and a forward Kolmogorov equation for the traffic density. We proved weak solutions to this system exist and are unique. Moreover, these controls are feasible for the mean-field game we pose. We implemented a numerical simulation, and obtained an adaptive control for the traffic density which dissipates traffic and increases and smooths the bulk velocity.

Of course, the question of  whether this approach results in solutions that converge to the solution of the stationary mean-field game as the number of basis functions $K \rightarrow \infty$ is also open. 

There are several more avenues which we intend to explore. These are:
\begin{enumerate}
	\item $\epsilon(N)$ suboptimality of the optimal control from the mean-field game for the finite-size game;
	\item Suboptimality bounds for the control from the ADP system for the mean-field game; and
	\item Addition of multiple lanes and collision avoidance.
\end{enumerate}
\noindent
We leave these questions to be answered in our future work.

\bibliographystyle{siam}  
\bibliography{references.bib}

\begin{thebibliography}{10}

\bibitem{ames2019control}
{\sc A.~D. Ames, S.~Coogan, M.~Egerstedt, G.~Notomista, K.~Sreenath, and
  P.~Tabuada}, {\em Control barrier functions: Theory and applications}, in
  2019 18th European control conference (ECC), IEEE, 2019, pp.~3420--3431.

\bibitem{baras1974state}
{\sc J.~Baras, R.~Brockett, and P.~Fuhrmann}, {\em State-space models for
  infinite-dimensional systems}, IEEE Transactions on Automatic Control, 19
  (1974), pp.~693--700.

\bibitem{bacsar1998dynamic}
{\sc T.~Ba{\c{s}}ar and G.~J. Olsder}, {\em Dynamic noncooperative game
  theory}, SIAM, 1998.

\bibitem{basar2018handbook}
{\sc T.~Ba{\c{s}}ar and G.~Zaccour}, {\em Handbook of dynamic game theory},
  Springer, 2018.

\bibitem{belletti2017expert}
{\sc F.~Belletti, D.~Haziza, G.~Gomes, and A.~M. Bayen}, {\em Expert level
  control of ramp metering based on multi-task deep reinforcement learning},
  IEEE Transactions on Intelligent Transportation Systems, 19 (2017),
  pp.~1198--1207.

\bibitem{bensoussan2013mean}
{\sc A.~Bensoussan, J.~Frehse, P.~Yam, et~al.}, {\em Mean field games and mean
  field type control theory}, vol.~101, Springer, 2013.

\bibitem{brezis2011functional}
{\sc H.~Brezis}, {\em Functional analysis, Sobolev spaces and partial
  differential equations}, Springer, 2011.

\bibitem{brockett2015finite}
{\sc R.~W. Brockett}, {\em Finite dimensional linear systems}, SIAM, 2015.

\bibitem{cheney2001analysis}
{\sc W.~Cheney}, {\em Analysis for applied mathematics}, Springer, 2001.

\bibitem{chevalier2015micro}
{\sc G.~Chevalier, J.~Le~Ny, and R.~Malham{\'e}}, {\em A micro-macro traffic
  model based on mean-field games}, in 2015 American Control Conference (ACC),
  IEEE, 2015, pp.~1983--1988.

\bibitem{coghi2022mckean}
{\sc M.~Coghi, W.~Dreyer, P.~K. Friz, P.~Gajewski, C.~Guhlke, and M.~Maurelli},
  {\em A mckean--vlasov sde and particle system with interaction from
  reflecting boundaries}, SIAM Journal on Mathematical Analysis, 54 (2022),
  pp.~2251--2294.

\bibitem{dogbe2010modeling}
{\sc C.~Dogb{\'e}}, {\em Modeling crowd dynamics by the mean-field limit
  approach}, Mathematical and Computer Modelling, 52 (2010), pp.~1506--1520.

\bibitem{ern2021finite}
{\sc A.~Ern and J.~Guermond}, {\em Finite Elements II: Galerkin Approximation,
  Elliptic and Mixed PDEs}, Texts in Applied Mathematics, Springer
  International Publishing, 2021.

\bibitem{evans2010partial}
{\sc L.~Evans}, {\em Partial Differential Equations}, Graduate studies in
  mathematics, American Mathematical Society, 2010.

\bibitem{fattorini1968boundary}
{\sc H.~O. Fattorini}, {\em Boundary control systems}, SIAM Journal on Control,
  6 (1968), pp.~349--385.

\bibitem{festa2018mean}
{\sc A.~Festa and S.~G{\"o}ttlich}, {\em A mean field game approach for
  multi-lane traffic management}, IFAC-PapersOnLine, 51 (2018), pp.~793--798.

\bibitem{friedman1972stochastic}
{\sc A.~Friedman}, {\em Stochastic differential games}, Journal of differential
  equations, 11 (1972), pp.~79--108.

\bibitem{gilbarg2015elliptic}
{\sc D.~Gilbarg and N.~Trudinger}, {\em Elliptic Partial Differential Equations
  of Second Order}, Classics in Mathematics, Springer Berlin Heidelberg, 2015.

\bibitem{huang2019stabilizing}
{\sc K.~Huang, X.~Di, Q.~Du, and X.~Chen}, {\em Stabilizing traffic via
  autonomous vehicles: A continuum mean field game approach}, in 2019 IEEE
  Intelligent Transportation Systems Conference (ITSC), IEEE, 2019,
  pp.~3269--3274.

\bibitem{kolmogorov1975introductory}
{\sc A.~N. Kolmogorov and S.~V. Fomin}, {\em Introductory real analysis},
  Courier Corporation, 1975.

\bibitem{lasry2007mean}
{\sc J.-M. Lasry and P.-L. Lions}, {\em Mean field games}, Japanese journal of
  mathematics, 2 (2007), pp.~229--260.

\bibitem{lighthill1955kinematic}
{\sc M.~J. Lighthill and G.~B. Whitham}, {\em On kinematic waves ii. a theory
  of traffic flow on long crowded roads}, Proceedings of the Royal Society of
  London. Series A. Mathematical and Physical Sciences, 229 (1955),
  pp.~317--345.

\bibitem{lions1984optimal}
{\sc P.~Lions}, {\em Optimal control of reflected diffusion processes}, in
  Filtering and Control of Random Processes, Springer, 1984, pp.~157--163.

\bibitem{lions1985optimal}
\leavevmode\vrule height 2pt depth -1.6pt width 23pt, {\em Optimal stochastic
  control with state constraints}, in Stochastic Differential Systems Filtering
  and Control, Springer, 1985, pp.~286--295.

\bibitem{lions1984stochastic}
{\sc P.-L. Lions and A.-S. Sznitman}, {\em Stochastic differential equations
  with reflecting boundary conditions}, Communications on pure and applied
  Mathematics, 37 (1984), pp.~511--537.

\bibitem{neunzert1984introduction}
{\sc H.~Neunzert}, {\em An introduction to the nonlinear boltzmann-vlasov
  equation}, in Kinetic theories and the Boltzmann equation, Springer, 1984,
  pp.~60--110.

\bibitem{oksendal2010stochastic}
{\sc B.~{\O}ksendal}, {\em Stochastic Differential Equations: An Introduction
  with Applications}, Universitext, Springer Berlin Heidelberg, 2010.

\bibitem{pazy1983semigroups}
{\sc A.~Pazy}, {\em Semigroups of linear operators and applications to partial
  differential equations}, Applied Mathematical Sciences, 44 (1983).

\bibitem{pilipenko2014introduction}
{\sc A.~Pilipenko}, {\em An introduction to stochastic differential equations
  with reflection}, vol.~1, Universit{\"a}tsverlag Potsdam, 2014.

\bibitem{prigogine1960boltzmann}
{\sc I.~Prigogine and F.~C. Andrews}, {\em A boltzmann-like approach for
  traffic flow}, Operations Research, 8 (1960), pp.~789--797.

\bibitem{richards1956shock}
{\sc P.~I. Richards}, {\em Shock waves on the highway}, Operations research, 4
  (1956), pp.~42--51.

\bibitem{shu1988efficient}
{\sc C.-W. Shu and S.~Osher}, {\em Efficient implementation of essentially
  non-oscillatory shock-capturing schemes}, Journal of computational physics,
  77 (1988), pp.~439--471.

\bibitem{simon1986compact}
{\sc J.~Simon}, {\em Compact sets in the space $l^p (0, t; b)$}, Annali di
  Matematica pura ed applicata, 146 (1986), pp.~65--96.

\bibitem{tanaka1979stochastic}
{\sc H.~Tanaka}, {\em Stochastic differential equations with reflecting
  boundary condition in convex regions}, Hiroshima Mathematical Journal, 9
  (1979), pp.~163--177.

\bibitem{taylor2010partial}
{\sc M.~Taylor}, {\em Partial Differential Equations II: Qualitative Studies of
  Linear Equations}, Applied Mathematical Sciences, Springer New York, 2010.

\bibitem{tirumalai2021robust}
{\sc A.~Tirumalai and J.~S. Baras}, {\em A robust mean-field game of
  boltzmann-vlasov-like traffic flow}, arXiv preprint arXiv:2111.06426,
  (2021).

\bibitem{watanabe1971stochastic}
{\sc S.~Watanabe}, {\em On stochastic differential equations for
  multi-dimensional diffusion processes with boundary conditions}, Journal of
  Mathematics of Kyoto University, 11 (1971), pp.~169--180.

\bibitem{yu2021reinforcement}
{\sc H.~Yu, S.~Park, A.~Bayen, S.~Moura, and M.~Krstic}, {\em Reinforcement
  learning versus pde backstepping and pi control for congested freeway
  traffic}, IEEE Transactions on Control Systems Technology,  (2021).

\end{thebibliography}
\end{document}